\documentclass{article}

\usepackage{amsfonts}
\usepackage{amsmath}
\usepackage{diagrams}
\usepackage{fullpage}
\usepackage{color}
\usepackage{pdflscape}
\usepackage{appendix}

\setcounter{MaxMatrixCols}{10}

\newtheorem{theorem}{Theorem}

\newenvironment{proof}[1][Proof]{\noindent\textbf{#1.} }{\ \rule{0.5em}{0.5em}}

\input ArtNouvc.fd
\newcommand*\initfamily{\usefont{U}{ArtNouvc}{xl}{n}}

\begin{document}

\title{Quantizations of group actions}
\author{Hilja L. Huru, Valentin V. Lychagin}
\date{}
\maketitle

\begin{abstract}
We describe quantizations on monoidal categories of modules over finite groups. They are given by quantizers which are elements of a group algebra. Over the complex numbers we find these explicitly. For modules over $S_3$ and $A_4$ we are given explicit forms for all quantizations.
\end{abstract}

\section*{Introduction}
In the papers \cite{h2,HL} we found the quantizations of the monoidal categories of modules graded by finite abelian groups. Quantizations are natural isomorphisms of the tensor bifunctor $Q:\otimes \rightarrow \otimes$ that satisfy the coherence condition. By this condition the quantizations are 2-cocycles, and under action by isomorphisms of the identity functor they are representatives of the second cohomology of the group. 

With these explicit descriptions of quantizations we showed that classical non-commutative algebras like the quaternions and the octonions are obtained by quantizing a required number of copies of $\mathbb{R}$. Moreover we obtained new classes of non-commutative algebras. The resulting non-commutativity is governed by a braiding which is the quantization of the twist.

In this paper we will investigate the situation for quantizations of modules with action of finite groups that are not necessarily abelian. Further the definition of quantizations is widened as they may be natural transformations, not only natural isomorphisms.

Quantizations $Q$ of the monoidal category of modules over finite groups G are realized by elements $q_Q$ in the group algebra $\mathbb{C}[G\times G]$ called quantizers. These satisfy a form of the coherence condition, normalization and invariance with respect to $G$-action.

Let $\hat G$ be the dual of $G$. We use the Fourier transform to reconsider these quantizers as sequences of operators $\hat q_{\alpha,\beta}$ in $End(E_\alpha\otimes E_\beta)$ where  $E_\alpha, E_\beta$ are irreducible representations corresponding to $\alpha,\beta\in\hat G$. The coherence condition for the operators $\hat q_{\alpha,\beta}$ gives a system of quadratic matrix equations.

There is an equivalence relation on these quantizers by the action by natural isomorphisms of the identity functor. Taking the orbits of this action we arrive at the final expressions of the quantizers.

We apply the inverse of the Fourier transform to move back to the group algebra where we now have the quantizers $q_Q$ in $\mathbb{C}[G\times G]$ realizing the non-trivial quantizations in the category. 

To sum up, the procedure is as follows
\begin{diagram}
\text{Quantizations } Q &&\\
\dTo &&\\
\text{Quantizers } q_Q\in \mathbb{C}[G\times G] & \rTo{Fourier}   & \hat q_Q\in \oplus_{\alpha,\beta\in\hat G}End(E_\alpha\otimes E_\beta)\\
 &\luTo{Fourier^{-1}} &   \dTo_{\text{Calculations}}    \\
 &&\text{Explicit sequences } \{\hat q_{\alpha,\beta}\in End(E_\alpha\otimes E_\beta)\}_{\alpha,\beta\in\hat G}
\end{diagram}

We illustrate this method for abelian groups, see e.g. \cite{h3}, and the permutation groups $S_3$ and $A_4$. 
For $S_3$ there is a 1-parameter family of quantizations. 
For $A_4$ we have a larger selection with 2 parameters producing quantizations.

\section{Quantizations in braided monoidal categories}

A braided monoidal category $C$ is a category equipped with a tensor product $\otimes$ and two natural isomorphisms: an associativity constraint $assoc_C:X\otimes(Y\otimes Z)\rightarrow (X\otimes Y)\otimes Z$ and a braiding $\sigma_C:X\otimes Y\rightarrow Y\otimes X$, for objects $X,Y,Z$ in $C$, that both satisfy MacLane coherence conditions, see \cite{maclane}.

Let $C,D$ be braided monoidal categories and $\Phi:C\rightarrow D$ be a functor.
A \textit{quantization} $Q$ of the functor $\Phi$ is a natural transformation of the tensor bifunctor
\begin{eqnarray}
Q :\otimes_{D}\circ (\Phi \times\Phi) &\rightarrow& \Phi\circ \otimes_{C} ,\\
Q =Q_{X,Y}:\Phi(X)\otimes_{D} \Phi(Y)&\rightarrow& \Phi(X\otimes_C Y),
\end{eqnarray}
that satisfies the following  coherence condition
\begin{equation}
\begin{diagram}\label{qu1}
\Phi(X)\otimes (\Phi(Y)\otimes \Phi(Z)) & \rTo{1\otimes Q}   & \Phi(X)\otimes \Phi(Y\otimes Z) &\rTo{Q}&\Phi(X\otimes (Y \otimes Z)) \\
 \dTo{assoc_{D}}  & &  &&   \dTo_{\Phi\circ assoc_C\circ\Phi^{-1}}    \\
(\Phi(X)\otimes \Phi(Y))\otimes \Phi(Z) &  \rTo{Q\otimes 1}& \Phi(X\otimes Y)\otimes \Phi(Z)   &  \rTo{Q}  & \Phi((X\otimes Y) \otimes Z)
\end{diagram}
\end{equation}
for $X,Y\in Obj(C)$ (see \cite{lq}, for details). 

Note that in this paper we do not require the quantizations to be natural isomorphisms, only natural transformations (cf. \cite{lq}). 

We denote the set of quantizations of $\Phi$ by $$\bold{q}(\Phi).$$ 

Let $\lambda:\Phi\rightarrow \Phi$ be a unit preserving natural isomorphism of the functor $\Phi$. Then we define an action of $\lambda: Q \mapsto \lambda(Q)$ on the quantizations by requiring that the diagram
\begin{equation}
\begin{diagram}\label{idfunctor}
\Phi(X)\otimes \Phi(Y)& \rTo^{Q_{X,Y}} & \Phi(X\otimes Y)\\
 \dTo^{\lambda_{X}\otimes \lambda_{Y}} && \dTo_{\lambda_{X\otimes Y}}\\
\Phi(X)\otimes \Phi(Y) & \rTo^{\lambda(Q_{X,Y})} &\Phi(X\otimes Y)
\end{diagram}
\end{equation}
commutes. 

The orbits of the above action we denote by
\begin{equation}
\text{\initfamily\selectfont Q}\normalfont(\Phi).
\end{equation}

If a quantization is a natural isomorphism we will call it a \textit{regular} quantization and use the notations ${\bold q}^\circ(\Phi)$ and $\text{\initfamily\selectfont Q}^\circ (\Phi)$.

If the functor $\Phi$ is the identity $Id_C:C\rightarrow C$ we call $Q$ a \textit{quantization of the category $C$} and use the notations ${\bold q}(C)={\bold q}(Id_C)$ and 
$\text{\initfamily\selectfont Q}\normalfont(C)=\text{\initfamily\selectfont Q}\normalfont(Id_C)$.

For the following result, see also \cite{lq}.

Let $\Phi:B\rightarrow C,\Psi:C\rightarrow D$ be functors between the monoidal categories $B,C,D$ and let $Q^\Phi, Q^\Psi$ be quantizations of $\Phi$ and $\Psi$. The following formula defines the quantization
\begin{equation}\label{composition}
Q^{\Phi\circ\Psi}=\Psi(Q_{X,Y})\circ Q^\Psi_{\Phi(X),\Phi(Y)}
\end{equation}
of the composition $\Psi\circ\Phi$. 

We call $Q^{\Phi\circ\Psi}$ composition of quantizations. Then the composition defines an associative multiplication
\begin{equation}
{\bold q}(\Phi)\times {\bold q}(\Psi)\rightarrow {\bold q}(\Psi\circ\Phi)
\end{equation}
on the sets of quantizations.

In partucular, the composition (\ref{composition}), where $\Phi=\Psi=Id_C$, defines a multiplication
\begin{equation}
{\bold q}(C)\times {\bold q}(C)\rightarrow {\bold q}(C)
\end{equation}
and gives a monoid structure on the set of quantizations.

Moreover, the regular quantizations form a group in this monoid.

The monoid ${\bold q}(C)$ acts on a variety of objects, see e.g. \cite{lq}. We list some examples here.
\begin{description}
\item[Braidings.] Let $\sigma$ be a braiding in the category $C$. If $Q\in {\bold q}^\circ(C)$ then we define an action of $Q$ on braidings by requiring that the following diagram
\begin{equation*}
\begin{diagram}
X\otimes Y & \rTo{Q_{X,Y}}& X\otimes Y\\
 \dTo{\sigma_{Q}}  & &   \dTo_{\sigma}    \\
Y \otimes X& \rTo{Q_{Y,X}} &Y\otimes X
\end{diagram}
\end{equation*}
commutes, where $$Q:\sigma\mapsto\sigma_Q=Q_{Y,X}^{-1}\circ\sigma\circ Q_{X,Y}.$$
Then $\sigma_{Q}$ is a braiding too.

\item[Algebras.] Let A be an associative algebra in the category with multiplication $\mu:A\otimes A\rightarrow A$. Let $Q\in {\bold q}(C)$. Then we define a new multiplication $\mu^Q$ on A by requiring that the following diagram\begin{equation}
\begin{diagram}
A\otimes A & \rTo{Q_{A,A}}& A\otimes A\\
 & \rdTo{\mu^Q} &   \dTo_{\mu}\\
& &A
\end{diagram}
\end{equation}
commutes.

Then $(A,\mu^Q)$ is an algebra in the category C too. We call it the \textit{quantized algebra}.

\item[Modules.] Let E be a left module over the algebra A in the category with multiplication $\nu:A\otimes E\rightarrow E$. We define a quantized multiplication $\nu^Q$ on E by requiring that the following diagram
\begin{equation}
\begin{diagram}
A\otimes E & \rTo{Q_{A,E}}& A\otimes E\\
 & \rdTo{\nu^Q} &   \dTo_{\nu}\\
& &E
\end{diagram}
\end{equation}
commutes. 

Then $(E,\nu^Q)$ is a module over the quantized algebra $(A,\mu^Q)$. We call it the \textit{quantized module}. 

Right modules are quantized in a similar way.

\item[Coalgebras.]Let $A^*$ be a coalgebra in the category with comultiplication $\mu^*:A^*\rightarrow A^*\otimes A^*$. We define a new comultiplication $(\mu^*)^Q$ on $A^*$ by requiring that the following diagram
\begin{equation}
\begin{diagram}
A^*\otimes A^* & \rTo{Q_{A^*,A^*}}& A^*\otimes A^*\\
 & \luTo{(\mu^*)^Q} &   \uTo_{\mu^*}    \\
& &A^*
\end{diagram}
\end{equation}
commutes. 

Then $(A^*,(\mu^*)^Q)$ is a coalgebra in the category C too. We call it the \textit{quantized coalgebra}.

\item[Bialgebras.] Let $B$ be a bialgebra in the category with multiplication $\mu_B:B\otimes B\rightarrow B$ and comultiplication $\mu^*_{B}:B\rightarrow B\otimes B$. The \textit{quantized bialgebra} is the same object $B_Q=B$ equipped with the quantized multiplication $\mu_B^Q$ and quantized comultiplication $(\mu^*_{B})^Q$, quantized as above. This is also a bialgebra in the category.
\end{description}

\section{Quantizations of G-modules}
Let R be a commutative ring with unit and let G be a finite group. Denote by $Mod_R(G)$ the monoidal category of finitely generated G-modules over R and let $\bold{q}(G)$ and $\text{\initfamily\selectfont Q}\normalfont(G)$ be the sets of quantizations of this category.

Let X and Y  be G-modules. Recall that the tensor product $X\otimes Y$ over R is a G-module with action
\begin{equation*}
g(x\otimes y)=g(x)\otimes g(y),
\end{equation*} 
for $g\in G, x\in X, y\in Y$. 

Let $R[G]$ be the group algebra of G over R.

Then there is an isomorphism between the categories of G-modules and $R[G]$-modules, $Mod_R(G)=Mod_R(R[G])$ (see e.g. \cite{h3}). 

Hence, $$\bold{q}(G)=\bold{q}(Mod_R(R[G])),  \text{\initfamily\selectfont Q}\normalfont(G)=\text{\initfamily\selectfont Q}\normalfont(Mod_R(R[G])).$$

\begin{theorem}\label{th-q}
Any quantization $Q\in \bold{q}(G)$ of the category of G-modules has the form
\begin{eqnarray}\label{q}
Q_{X,Y}:x\otimes y \mapsto q_Q\cdot(x\otimes y)=\sum_{g,h\in G}q_{g,h} gx\otimes hy,
\end{eqnarray}
where $$q_Q=\sum_{g,h\in G}q_{g,h} (g,h)$$
are elements of the group algebra $R[G\times G]$, for  $x\in X,y\in Y$ and $X,Y\in Obj(Mod_R(G))$.
\end{theorem}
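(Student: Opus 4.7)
The plan is to exploit naturality of $Q$ together with the fact that $R[G]$ is a free generator of the category $Mod_R(R[G])$, so that a natural endomorphism of the tensor bifunctor is completely determined by its single value at $(R[G], R[G])$. First I would note that each $Q_{X,Y}$ is a morphism in $Mod_R(G)$, hence in particular $R$-bilinear on $X \times Y$ and therefore determined by its values on elementary tensors $x \otimes y$.

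Next I would introduce the distinguished element
$$q_Q \;:=\; Q_{R[G],R[G]}(1\otimes 1) \;\in\; R[G]\otimes_R R[G],$$
and use the canonical $R$-module isomorphism $R[G]\otimes_R R[G] \cong R[G\times G]$ to expand it in the standard basis as $q_Q = \sum_{g,h\in G} q_{g,h}(g,h)$. This already produces the coefficients $q_{g,h}$ appearing in the statement.

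The key step is to transfer this single datum to an arbitrary tensor product $X\otimes Y$. Given $x\in X$ and $y\in Y$, the assignments $\phi_x:r\mapsto r\cdot x$ and $\phi_y:r\mapsto r\cdot y$ define $R[G]$-module maps $R[G]\to X$ and $R[G]\to Y$; the verification $\phi_x(g\cdot r)=(gr)\cdot x=g\cdot \phi_x(r)$ is immediate. Naturality of $Q$ applied to the morphism $\phi_x\otimes\phi_y$ in $Mod_R(G)\times Mod_R(G)$ then gives the identity
$$Q_{X,Y}\circ(\phi_x\otimes\phi_y) \;=\; (\phi_x\otimes\phi_y)\circ Q_{R[G],R[G]},$$
and evaluating both sides at $1\otimes 1$ yields
$$Q_{X,Y}(x\otimes y) \;=\; (\phi_x\otimes\phi_y)(q_Q) \;=\; \sum_{g,h\in G} q_{g,h}\, gx\otimes hy,$$
which is precisely the formula claimed. $R$-bilinearity of $Q_{X,Y}$ extends the identity from elementary tensors to all of $X\otimes Y$.

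The only potential obstacle is cosmetic, namely checking that $R[G]$ itself lies in the category $Mod_R(G)$ of finitely generated $G$-modules so that naturality can be invoked at the object $(R[G],R[G])$; this holds because $G$ is finite. Observe that the coherence condition (\ref{qu1}), the normalization, and the $G$-invariance of $q_Q$ are not needed for this theorem: they carve out which elements of $R[G\times G]$ actually arise from a quantization, whereas the present statement only asserts that every quantization is of the displayed form, which is a pure consequence of the Yoneda-type reconstruction sketched above.
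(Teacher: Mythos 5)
Your proposal is correct and follows essentially the same route as the paper: both define $q_Q = Q_{R[G],R[G]}(1\otimes 1)$ and transfer it to arbitrary modules via naturality applied to the evaluation morphisms $\phi_x\otimes\phi_y$. The extra details you supply (bilinearity, $R[G]$ being finitely generated since $G$ is finite, and the observation that coherence/normalization are not needed here) are accurate but not substantively different from the paper's argument.
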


\begin{proof}
We identify elements $x\in X$ for $X\in Obj(Mod_R(G))$ with morphisms
\begin{eqnarray*}
\phi_x:R[G]\rightarrow X,
h\mapsto h\cdot x.
\end{eqnarray*}
Then, for any elements  $x\in X,y\in Y$, $X,Y\in Obj(Mod_R(G))$ and a quantization $Q\in \bold{q}(G)$ the following diagram
\begin{equation*}
\begin{diagram}
X\otimes Y & \rTo{Q_{X,Y}}& X\otimes Y\\
 \uTo{\phi_x\otimes \phi_y}  & &   \uTo_{\phi_{x\otimes y}}    \\
R[G]\otimes R[G]& \rTo{Q_{R[G],R[G]}} &R[G]\otimes R[G]
\end{diagram}
\end{equation*}
commutes. 

Therefore,
\begin{equation*}
\begin{diagram}
x\otimes y & \rTo{Q_{X,Y}}& Q_{X,Y}(x\otimes y)=q_Q\cdot (x\otimes y)\\
 \uTo{\phi_x\otimes \phi_y}  & &   \uTo_{\phi_{x\otimes y}}    \\
1\otimes 1& \rTo{Q_{R[G],R[G]}} & q_Q
\end{diagram}
\end{equation*}
where $q_Q=Q_{R[G],R[G]}(1\otimes 1)\in R[G\times G]$, and
\begin{equation}\label{R-q}
Q_{X,Y}(x\otimes y)= q_Q\cdot (x\otimes y).
\end{equation} 
\end{proof}

We call the elements $q_Q$ \textit{quantizers} and denote by $$\bold{q}(G)=\{q_Q\in R[G\times G]\}$$ 
the set of quantizers.

\begin{theorem}
An element $q\in R[G\times G]$ defines a quantization on the category of G-modules if and only if it satisfies the following conditions:
\begin{itemize}
\item The coherence condition 
\begin{equation}\label{coh1}
(1\otimes \Delta)(q)\cdot (1\otimes q)=(\Delta\otimes 1)(q)\cdot(q \otimes 1)
\end{equation}
where $\Delta$ is the diagonal of $R[G]$. 
\item The normalization condition
\begin{equation}\label{norm}
q\cdot(x\otimes 1)=q\cdot(1\otimes x)=1.
\end{equation}
\item The naturality condition
\begin{equation}\label{Ginv}
q\cdot\Delta(g)=\Delta(g)\cdot q
\end{equation}
for $g\in G$.
\end{itemize}
\end{theorem}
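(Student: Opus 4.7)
The plan is to use Theorem \ref{th-q}, which identifies a quantization $Q$ with its quantizer $q=Q_{R[G],R[G]}(1\otimes 1)\in R[G\times G]$, and to translate each defining feature of $Q$ (being a morphism in $Mod_R(G)$, naturality in $(X,Y)$, satisfying the coherence diagram (\ref{qu1}), and compatibility with the unit object) into an algebraic identity in $R[G\times G]$. For the ``only if'' direction I specialize to $X,Y,Z=R[G]$ and chase the generator $1\otimes 1$ (or $1\otimes 1\otimes 1$) through the relevant diagrams. For the ``if'' direction I take $q$ satisfying the three conditions, declare $Q_{X,Y}(x\otimes y):=q\cdot(x\otimes y)$, and verify each property for arbitrary $X,Y$.

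The forward direction is essentially bookkeeping. Equation (\ref{Ginv}) arises because $Q_{R[G],R[G]}$ is $G$-equivariant by definition: applying equivariance to $1\otimes 1$ and using $g\cdot(1\otimes 1)=g\otimes g=\Delta(g)\cdot(1\otimes 1)$ yields $q\cdot\Delta(g)=\Delta(g)\cdot q$ immediately. Equation (\ref{coh1}) is obtained by chasing $1\otimes 1\otimes 1$ through diagram (\ref{qu1}) with $\Phi=Id$ and $X=Y=Z=R[G]$: the upper route produces $(1\otimes\Delta)(q)\cdot(1\otimes q)$ acting on the generator, the lower route produces $(\Delta\otimes 1)(q)\cdot(q\otimes 1)$, and freeness of $1\otimes 1\otimes 1$ as a generator of $R[G]^{\otimes 3}$ over $R[G\times G\times G]$ lifts the resulting equality to the identity (\ref{coh1}). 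Equation (\ref{norm}) comes from the requirement that $Q$ reduce to the identity when one factor is the unit object of $Mod_R(G)$.

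For the converse, the formula $Q_{X,Y}(x\otimes y)=q\cdot(x\otimes y)$ is $G$-equivariant thanks to (\ref{Ginv}): $Q(g(x\otimes y))=q\cdot\Delta(g)\cdot(x\otimes y)=\Delta(g)\cdot q\cdot(x\otimes y)=g\cdot Q(x\otimes y)$. Naturality in $(X,Y)$ is automatic because any $G$-equivariant $\phi,\psi$ commute with the $R[G\times G]$-action by which $q$ operates, so $(\phi\otimes\psi)(q\cdot(x\otimes y))=q\cdot(\phi(x)\otimes\psi(y))$. The main subtlety — and the only step that is not a direct rewriting — is to show that the single identity (\ref{coh1}) in $R[G\times G\times G]$ forces the categorical diagram (\ref{qu1}) for \emph{all} triples of $G$-modules. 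This is handled by noting that every $x\in X$ is the image of $1\in R[G]$ under the $G$-equivariant map $\phi_x$ introduced in the proof of Theorem \ref{th-q}, so applying the already-established naturality of $Q$ to $\phi_x\otimes\phi_y\otimes\phi_z$ reduces the general coherence diagram to the $R[G]^{\otimes 3}$ case, where it holds by hypothesis. The unit axiom follows from (\ref{norm}) in the same manner.
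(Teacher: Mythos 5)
Your proposal is correct and follows essentially the same route as the paper: both identify $Q_{XY,Z}$ and $Q_{X,YZ}$ with the actions of $(\Delta\otimes 1)(q)$ and $(1\otimes\Delta)(q)$ and reduce everything to the free generator $1\otimes 1$ of $R[G]\otimes R[G]$ via the morphisms $\phi_x$ from Theorem \ref{th-q}. You are in fact more complete than the paper, which only sketches the translation of the coherence condition and leaves both the remaining conditions and the entire converse direction to the reader, whereas you spell out the lifting argument from the generator and the reduction of the general coherence diagram to the $R[G]^{\otimes 3}$ case.
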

\begin{proof}
The coherence condition follows from (\ref{qu1}) where $$Q_{XY,Z}:(X\otimes Y)\otimes Z \rightarrow (X\otimes Y)\otimes Z$$ 
is represented as follows
$$(x\otimes y)\otimes z \rightarrow (\Delta\otimes 1)(q)((x\otimes y)\otimes z)=\sum_{g,h\in G} q_{g,h}(gx\otimes gy)\otimes hz,$$ 
and similarly for $Q_{X,YZ}$. 

The two other conditions follow straight foreward from the normalization and naturality conditions on the quantizations.
\end{proof}

See also \cite{lq} for similar settings.

We shall from now on use the notion quantizer instead of quantization.

Remark that the conditions (\ref{coh1}), (\ref{norm}) and (\ref{Ginv}) are some kind of 2-cyclic condition on $\bold{q}(G)$ (see, for example, section \ref{abgp} on finite abelian groups).

Let $U(G)$ be the set of units of $R[G]$.
\begin{theorem}
The set of quantizers of G-modules 
$\text{\initfamily\selectfont Q}\normalfont(G)$ is the orbit space of the following $U(G)$-action on ${\bold q}(G)$:
\begin{equation}\label{ql-action}
l(q)\stackrel{\text{\tiny def}}{=}\Delta(l)\cdot q\cdot l^{-\otimes 2},
\end{equation}
where $l\in U(G)$.
\end{theorem}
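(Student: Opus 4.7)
The plan is to parameterize the unit-preserving natural isomorphisms $\lambda: Id \to Id$ of $Mod_R(G)$ by units $l \in U(G)$, translate diagram~(\ref{idfunctor}) into an equality in $R[G\times G]$, and then verify that the resulting assignment $q \mapsto l(q)$ is a group action which preserves the three quantizer conditions. The orbit-space description of $\text{\initfamily\selectfont Q}\normalfont(G)$ then follows directly from its definition as the set of $\lambda$-orbits.

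First, I would repeat the $\phi_x$-trick from the proof of Theorem~\ref{th-q}. A natural transformation $\lambda$ of the identity functor is determined by its value $l := \lambda_{R[G]}(1)$, because naturality with respect to $\phi_x: R[G] \to X$, $h \mapsto hx$, forces
\[
\lambda_X(x) = \lambda_X(\phi_x(1)) = \phi_x(\lambda_{R[G]}(1)) = l\cdot x
\]
for every $X$ and every $x \in X$. The conditions that $\lambda$ be $G$-equivariant, a unit isomorphism, and unit-preserving pin $l$ down as an invertible central element with $\epsilon(l) = 1$, i.e. an element of $U(G)$ in the relevant sense. Using the diagonal $G$-action on tensor products, $\lambda_X \otimes \lambda_Y$ acts as multiplication by $l\otimes l$ while $\lambda_{X\otimes Y}$ acts as multiplication by $\Delta(l)$. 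Chasing $1\otimes 1 \in R[G]\otimes R[G]$ around diagram~(\ref{idfunctor}) with $X = Y = R[G]$, the two paths produce $\Delta(l)\cdot q$ and $\lambda(q)\cdot(l\otimes l)$, whose equality yields exactly $\lambda(q) = \Delta(l)\cdot q \cdot l^{-\otimes 2}$ as in~(\ref{ql-action}).

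The main substantive step is verifying that $l(q)$ is again a valid quantizer, i.e., it still satisfies (\ref{coh1}), (\ref{norm}) and (\ref{Ginv}). Naturality~(\ref{Ginv}) is immediate from centrality of $l$, since then both $\Delta(l)$ and $l^{\otimes 2}$ commute with every $\Delta(g)$. Normalization~(\ref{norm}) follows from $\epsilon(l) = 1$ combined with the normalization of $q$. The coherence~(\ref{coh1}) is the delicate part: applying $\Delta\otimes 1$ and $1\otimes\Delta$ to $\Delta(l)\cdot q \cdot l^{-\otimes 2}$ and using coassociativity $(\Delta\otimes 1)\Delta = (1\otimes\Delta)\Delta$, the outermost $\Delta^{(2)}(l)$-factors match on both sides of the desired identity, while the inner $l^{-\otimes 3}$-factors cancel the twist, reducing the problem to the coherence of $q$ itself. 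This is the Hopf-algebraic incarnation of "coboundaries preserve 2-cocycles", and it is the step I expect to require the most careful bookkeeping.

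Finally, the group-action axioms are routine: $\Delta$ and $l\mapsto l^{\otimes 2}$ are both multiplicative, so $l_1(l_2(q)) = (l_1 l_2)(q)$, and $1(q) = q$. Since diagram~(\ref{idfunctor}) — which by definition cuts out $\text{\initfamily\selectfont Q}\normalfont(G)$ as an orbit space — has been exactly translated into formula~(\ref{ql-action}), the theorem follows.
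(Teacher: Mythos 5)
Your proof is correct and follows essentially the same route as the paper: parameterize the unit-preserving natural isomorphisms of the identity functor by (central) units $l=\lambda_{R[G]}(1)$ via the $\phi_x$-trick, then chase $1\otimes 1$ through diagram (\ref{idfunctor}) with $X=Y=R[G]$ to get $\Delta(l)\cdot q=\lambda(q)\cdot(l\otimes l)$ and hence (\ref{ql-action}). The paper's proof stops there; your additional checks that $l(q)$ still satisfies (\ref{coh1}), (\ref{norm}) and (\ref{Ginv}) and that $q\mapsto l(q)$ is a genuine group action are correct and supply details the paper leaves implicit.
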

\begin{proof}
Representing as above elements $x\in X$ by morphisms $\phi_x:R[G]\rightarrow X$ we get the following commutative diagram with $\lambda_X:X\rightarrow X$,
\begin{equation*}
\begin{diagram}
X & \rTo{\lambda_{X}}& X\\
\uTo{\phi_x} & & \uTo_{\phi_x}    \\
R[G]& \rTo{\lambda_{R[G]}} & R[G]
\end{diagram}
\end{equation*}
for any unit preserving natural isomorphism of the identity functor, $\lambda:Id_{Mod_R(R[G])}\rightarrow Id_{Mod_R(R[G])}$.

Therefore $\lambda$ is uniquely defined by elements $l\in \lambda_{R[G]}(1)$, and
 
\begin{equation}
\lambda_X(x)=\lambda_X(\phi_x(1))=\phi_x(\lambda_{R[G]}(1))=\phi_x(l)=l\cdot x.
\end{equation}

Let $q\in {\bold q}(G)$. Then the action (\ref{idfunctor}) gives
\begin{equation}
\lambda(q)=\Delta(l)\cdot q\cdot(l^{-1}\otimes l^{-1}).
\end{equation}
\end{proof}

We say that two quantizers $p,q\in{\bold q}(G)$ are equivalent if $p=l(q)$ for some $l\in U(G)$.

\section{The Fourier transform}

In this section we'll use the Fourier transform to find the quantizers, under the assumption that $R=\mathbb{C}$.
 
Below we list necessary formulae from representation theory of groups (see for example \cite{simon}).

Denote by $\hat G$ the dual of $G$ which is the set of equivalence classes of the irreducible representations of $G$. 
For each $\alpha\in\hat G$ we pick the corresponding irreducible representation on $E_\alpha$, $dim E_\alpha=d_\alpha$, and an explicit realisation of this representation by a $d_\alpha\times d_\alpha$-matrix $D^\alpha(g)=|D_{ij}^\alpha(g)|:E_\alpha\rightarrow E_\alpha$. 

The elements $D_{ij}^\alpha(g)\cdot g\in \mathbb{C}[G]$ span the group algebra $\mathbb{C}[G]$ and $\mathbb{C}[G]$ is isomorphic as an algebra to a direct sum of matrix algebras by the Fourier transform 
\begin{eqnarray*}
F:\mathbb{C}[G]&\rightarrow& \oplus_{\alpha\in\hat G}End(E_\alpha)\\
\tilde{ f}=\sum_{g\in G}f(g)\cdot g &\mapsto& \hat f = \{\hat f_\alpha\}_{\alpha\in\hat G}.
\end{eqnarray*}
We'll consider $\hat f$ as a ``function'' on the dual group which at each point $\alpha\in\hat G$ takes values in $End(E_\alpha)$: 
$$\hat f_\alpha =\hat f(\alpha)\in End(E_\alpha),$$ 
and
\begin{equation}
F(\tilde f)(\alpha)=\hat f_\alpha=\sum_{g\in G}D^\alpha(g)f(g).
\end{equation}
The inverse of the Fourier transform has the following form
\begin{eqnarray}
F^{-1}(\hat f)&=&\frac{1}{|G|}\sum_{g\in G}\sum_{\alpha\in \hat G} d_{\alpha} Tr(D^{\alpha}(g)^*\hat f_{\alpha})\cdot g.
\end{eqnarray}

As we have seen the quantizers are elements of $\mathbb{C}[G\times G]$. 

The dual of $G\times G$ is $\widehat{G\times G}=\hat G\times\hat G$, and
$$E_{\alpha\beta}=E_\alpha\otimes E_\beta$$
for $(\alpha,\beta)\in\hat G\times \hat G$ with the action 
$$D^{\alpha,\beta}(g,h)=D^{\alpha}(g)\otimes D^{\beta}(h).$$
In this case the Fourier transform 
$$F:\mathbb{C}[G]\otimes \mathbb{C}[G]=\mathbb{C}[G\times G]\rightarrow \oplus_{\alpha,\beta\in\hat G}End(E_\alpha\otimes E_\beta)$$ 
and its inverse have the following forms
\begin{eqnarray}
F(\tilde f){(\alpha,\beta)}=\hat f_{\alpha,\beta}&=&\sum_{g,h\in G}f(g,h){D^{\alpha,\beta}(g,h)}\\
\label{invF}
F^{-1}(\hat f)&=&\frac{1}{|G|^2}\sum_{g,h\in G}\sum_{\alpha,\beta\in \hat G} d_{\alpha,\beta} Tr(D^{\alpha,\beta}(g,h)^*\hat f_{\alpha,\beta})\cdot(g,h)
\end{eqnarray}
where 
$$\tilde f=\sum_{g,h\in G}f(g,h)(g,h)$$ 
and 
$$\{\hat f_{\alpha,\beta}\in End(E_{\alpha}\otimes E_\beta)\}_{\alpha,\beta\in\hat G}.$$

Let $\chi_\alpha(g)=Tr(D^\alpha(g))$ be the character of the irreducible representation $E_\alpha,\alpha\in\hat G$.

Splitting of the tensor product of $E_\alpha\otimes E_\beta$ into a sum of irreducible representations we get isomorphisms  
\begin{equation}\label{tensor}
\nu_{\alpha,\beta}:E_\alpha\otimes E_\beta\rightarrow\oplus_{\gamma\in \hat G}c_{\alpha\beta}^\gamma E_\gamma,
\end{equation} 
where $c_{\alpha\beta}^\gamma\in\mathbb{N}$ are called the \textit{Clebsch-Gordan integers}. 

These integers can be computed as follows
$$\chi_\alpha\cdot\chi_\beta=\sum_\gamma c_{\alpha\beta}^\gamma\chi_\gamma.$$ 

Projections $p_\alpha$ of G-modules $E=\sum_{\alpha\in\hat G}c^{\alpha} E_\alpha$ onto its irreducible components $c_\alpha E_\alpha=E_{(\alpha)}$ are the following
\begin{eqnarray}
p_\alpha=\frac{d_\alpha}{|G|}\sum_{g\in G}\chi_\alpha(g) D^\alpha(g^{-1}).
\end{eqnarray}
where $E_{(\alpha)}\simeq\mathbb{C}^{c^\alpha}\otimes E_{\alpha}$.
They satisfy orthogonality conditions
\begin{eqnarray}
\sum_{\alpha\in\hat G}p_\alpha =1,p_\alpha^2=p_\alpha,p_\alpha p_\beta=0.
\end{eqnarray}

For the tensor products (\ref{tensor}) the projectors take the form
\begin{equation}
p_\gamma=\sum_{g\in G}\chi_\gamma(g)D^\alpha(g^{-1})\otimes D^\beta(g^{-1}).
\end{equation}
The matrix representation is
$D^\alpha(g)\otimes D^\beta(h)=|D^{\alpha\beta}_{\gamma}(g,h)|$, $g,h\in G$ where
\begin{equation}
\mathbb{C}^{c_{\alpha\beta}^{\gamma}}\otimes E_{\gamma} \rTo{D^{\alpha\beta}_{\gamma}(g,h)} \mathbb{C}^{c_{\alpha\beta}^{\gamma}}\otimes E_{\gamma}.
\end{equation}

\subsection{The Fourier transform on quantizers}

We now rewrite the coherence condition (\ref{qu1}) for quantizers in terms of their Fourier transforms.

Let $q=\sum_{g,h\in G}q_{g,h}(g,h)$ be a quantizer and 
$$F(q)=\hat q=\sum_{\alpha,\beta\in\hat G}\hat q_{\alpha,\beta}$$ 
where $q_{\alpha,\beta}\in End(E_{\alpha}\otimes E_\beta )$. 
Then 
\begin{eqnarray}
\hat q(\alpha,\beta)=\hat q_{\alpha,\beta} = \sum_{g,h\in G}q(g,h){D^{\alpha,\beta}(g,h)}.
\end{eqnarray}
The operators $\hat q_{\alpha,\beta}:E_\alpha\otimes E_\beta\rightarrow E_\alpha\otimes E_\beta$ are G-morphisms. 

Therefore, due to isomorphisms $\nu_{\alpha,\beta}$, each $\hat q_{\alpha,\beta}$ is a direct sum 
\begin{equation}\label{q-decomp}
\hat q_{\alpha,\beta}=\oplus_{\gamma\in \hat G}\hat q^\gamma_{\alpha,\beta}
\end{equation}
of operators  
$$\hat q_{\alpha,\beta}^\gamma:c_{\alpha\beta}^\gamma E_\gamma\rightarrow c_{\alpha\beta}^\gamma E_\gamma.$$

Note that the operators $\hat q_{\alpha,\beta}^\gamma$ are given by $c_{\alpha\beta}^\gamma\times c_{\alpha\beta}^\gamma$-matrices. 

Rewriting the coherence condition in terms of these operators we get the following result.

\begin{theorem}\label{expandQ}
Let $q$ be a quantizer on the monoidal category $Mod_\mathbb{C}(G)$. Then the coherence condition diagram (\ref{qu1}) under the Fourier transform take the following form
\begin{equation}
\begin{diagram}
\sum_{\eta,\zeta}c^\zeta_{\alpha\eta} c^\eta_{\beta\gamma} E_\zeta& \rTo{} & \sum_{\eta,\zeta}c^\zeta_{\alpha\eta} c^\eta_{\beta\gamma} E_\zeta&  \rTo{\sum_{\zeta,\eta} \hat q^\zeta_{\alpha,\eta}} &\sum_{\eta,\zeta}c^\zeta_{\alpha\eta} c^\eta_{\beta\gamma} E_\zeta\\
\uTo{\nu_{\alpha\eta}}  & & \uTo{\nu_{\alpha\eta}}& &      \uTo{\nu_{\alpha\eta}}    \\
E_\alpha\otimes (\sum_\eta c^\eta_{\beta\gamma} E_\eta)   &  \rTo{1\otimes\sum_\eta \hat q^\eta_{\beta,\gamma}}  &E_\alpha\otimes (\sum_\eta c^\eta_{\beta\gamma} E_\eta) &  \rTo{\sum_\eta \hat q_{\alpha,\eta}} & E_\alpha\otimes (\sum_\eta c^\eta_{\beta\gamma} E_\eta)\\
 \uTo{\nu_{\beta\gamma}}  & & \uTo{\nu_{\beta\gamma}} & &   \uTo{\nu_{\beta\gamma}}   \\ 
 E_\alpha\otimes (E_\beta\otimes E_\gamma) & \rTo{1\otimes \hat q_{\beta,\gamma}}   & E_\alpha\otimes (E_\beta\otimes E_\gamma) & \rTo{\hat q_{\alpha,\beta \gamma}}  &  E_\alpha\otimes (E_\beta\otimes E_\gamma)\\
 \dTo{assoc}  & & & &     \dTo{assoc}    \\
(E_\alpha\otimes E_\beta)\otimes E_\gamma   &  \rTo{\hat q_{\alpha,\beta}\otimes 1}  & (E_\alpha\otimes E_\beta)\otimes E_\gamma&  \rTo{\hat q_{\alpha \beta,\gamma}} & (E_\alpha\otimes E_\beta)\otimes E_\gamma\\
 \dTo{\nu_{\alpha\beta}}  & & \dTo{\nu_{\alpha\beta}}& &      \dTo{\nu_{\alpha\beta}}    \\
(\sum_\xi c^\xi_{\alpha\beta} E_\xi)\otimes E_\gamma   &  \rTo{\sum_\xi \hat q^\xi_{\alpha,\beta}\otimes 1}  & (\sum_\xi c^\xi_{\alpha\beta} E_\xi)\otimes E_\gamma&  \rTo{\sum_\xi \hat q_{\xi,\gamma}} & (\sum_\xi c^\xi_{\alpha\beta} E_\xi)\otimes E_\gamma\\
\dTo{\nu_{\xi\gamma}}  & & \dTo{\nu_{\xi\gamma}} & &   \dTo{\nu_{\xi\gamma}}   \\ 
\sum_{\xi,\zeta}c^\zeta_{\xi\gamma} c^\xi_{\alpha\beta} E_\zeta&  \rTo & \sum_{\xi,\zeta}c^\zeta_{\xi\gamma} c^\xi_{\alpha\beta} E_\zeta&  \rTo{\sum_{\zeta,\xi} \hat q^\zeta_{\xi,\gamma}} & \sum_{\xi,\zeta}c^\zeta_{\xi\gamma} c^\xi_{\alpha\beta} E_\zeta
\end{diagram}
\end{equation}
where $\hat q_{\alpha,\beta\gamma}\in End(E_\alpha \otimes(E_\beta\otimes E_\gamma))$ and $\hat q_{\alpha\beta,\gamma}\in End((E_\alpha\otimes E_\beta)\otimes E_\gamma)$.
\end{theorem}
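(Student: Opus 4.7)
The plan is to transport the coherence condition (\ref{coh1}) through the Fourier transform component-wise. The condition lives in $\mathbb{C}[G\times G\times G]$, so I apply the triple Fourier transform to land in $\oplus_{\alpha,\beta,\gamma}End(E_\alpha\otimes E_\beta\otimes E_\gamma)$ and then reorganise each component using the Clebsch-Gordan decomposition (\ref{tensor}) together with the block structure (\ref{q-decomp}). Each sub-rectangle of the proposed diagram should then record one of these two kinds of reorganisation, and commutativity of the outer rectangle should be exactly the image of (\ref{coh1}).

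First I would compute how each of the four factors $(1\otimes\Delta)(q)$, $(1\otimes q)$, $(\Delta\otimes 1)(q)$, $(q\otimes 1)$ in (\ref{coh1}) acts on a typical component $E_\alpha\otimes E_\beta\otimes E_\gamma$ of the Fourier image. The factor $1\otimes q=\sum_{g,h}q_{g,h}(e,g,h)$ has Fourier component $1_{E_\alpha}\otimes\hat q_{\beta,\gamma}$; symmetrically $q\otimes 1$ gives $\hat q_{\alpha,\beta}\otimes 1_{E_\gamma}$. These supply the middle vertical arrows of the upper and lower halves of the diagram respectively.

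Next, for $(1\otimes\Delta)(q)=\sum_{g,h}q_{g,h}(g,h,h)$ the Fourier component at $(\alpha,\beta,\gamma)$ equals $\sum_{g,h}q_{g,h}\,D^\alpha(g)\otimes D^\beta(h)\otimes D^\gamma(h)$. Conjugating the two right-hand factors by $\nu_{\beta,\gamma}$ brings $D^\beta(h)\otimes D^\gamma(h)$ into the block form $\oplus_\eta 1\otimes D^\eta(h)$ on $\oplus_\eta c^\eta_{\beta\gamma}E_\eta$, so that the whole Fourier component rewrites as $\sum_\eta \hat q_{\alpha,\eta}$ acting on $E_\alpha\otimes\oplus_\eta c^\eta_{\beta\gamma}E_\eta$. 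A second application of $\nu_{\alpha,\eta}$ combined with (\ref{q-decomp}) expands this further as $\sum_{\eta,\zeta}\hat q^\zeta_{\alpha,\eta}$ on $\sum_{\eta,\zeta}c^\zeta_{\alpha\eta}c^\eta_{\beta\gamma}E_\zeta$, which is precisely the upper three rows of the diagram. The analogous computation for $(\Delta\otimes 1)(q)$ using $\nu_{\alpha,\beta}$ followed by $\nu_{\xi,\gamma}$ accounts for the lower three rows.

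The equality (\ref{coh1}) then reduces, component-by-component in $(\alpha,\beta,\gamma)$, to commutativity of the outer rectangle, while every interior square commutes by construction: either by naturality of the Clebsch-Gordan isomorphisms $\nu$, by functoriality of $\otimes$, or by the associativity constraint at the level of representations. The main obstacle is bookkeeping rather than conceptual: one must consistently interpret the intermediate spaces $c^\eta_{\beta\gamma}E_\eta\simeq\mathbb{C}^{c^\eta_{\beta\gamma}}\otimes E_\eta$ as $G$-modules with trivial action on the multiplicity factor, so that the block operators $\hat q^\zeta_{\alpha,\eta}$ really correspond to matrix coefficients of $\hat q_{\alpha,\eta}$, and so that the two associativity routes $E_\alpha\otimes(E_\beta\otimes E_\gamma)$ and $(E_\alpha\otimes E_\beta)\otimes E_\gamma$ yield the same Fourier data after transfer back to $\mathbb{C}[G\times G\times G]$.
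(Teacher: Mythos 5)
Your proposal is correct and follows exactly the route the paper (implicitly) takes: transport the coherence condition (\ref{coh1}) through the Fourier transform on $\mathbb{C}[G\times G\times G]$, identify the components of $(1\otimes q)$, $(q\otimes 1)$, $(1\otimes\Delta)(q)$, $(\Delta\otimes 1)(q)$ as $1\otimes\hat q_{\beta,\gamma}$, $\hat q_{\alpha,\beta}\otimes 1$, $\hat q_{\alpha,\beta\gamma}$, $\hat q_{\alpha\beta,\gamma}$, and then conjugate by the Clebsch--Gordan isomorphisms $\nu$ using (\ref{q-decomp}) to obtain the outer rows of the diagram. The paper states the theorem without a written proof, and your component-wise bookkeeping supplies precisely the verification it leaves implicit.
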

Assuming that our category is strict we get the following conditions for the quantizers.
\begin{theorem}\label{Qfinal}
 The set of operators $\hat q_{\alpha,\beta}^\gamma\in End(c_{\alpha\beta}^\gamma E_\gamma,c_{\alpha\beta}^\gamma E_\gamma)$ defines a quantizer 
\begin{equation}
q=\frac{1}{|G|^2}\sum_{g,h\in G} \sum_{\alpha,\beta\in \hat G} d_{\alpha,\beta}\oplus_{\gamma\in\hat G} Tr(D^{\alpha,\beta}_{\gamma}(g,h)^*\hat q_{\alpha,\beta}^\gamma)\cdot(g,h)\in\mathbb{C}[G\times G]
\end{equation} 
if and only if these operators are solutions of the following system of quadratic equations:
\begin{eqnarray}\label{coh}
&\sum_{\eta,\zeta\in \hat G} \hat q^\zeta_{\alpha,\eta}\hat q^\eta_{\beta,\gamma} =
\sum_{\zeta,\xi\in \hat G} \hat q^\zeta_{\xi,\gamma}\hat q^\xi_{\alpha,\beta}\\
&\hat q_{\alpha,0}^\alpha=\hat q_{0,\alpha}^\alpha=1
\end{eqnarray}
for all $\alpha,\beta, \gamma \in \hat G$.
\end{theorem}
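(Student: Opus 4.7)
The plan is to assemble the operators $\{\hat q_{\alpha,\beta}^\gamma\}$ into a single element $q\in\mathbb{C}[G\times G]$ via the inverse Fourier transform formula (\ref{invF}), and then translate the three defining conditions of a quantizer --- coherence (\ref{coh1}), normalization (\ref{norm}), and naturality (\ref{Ginv}) --- into conditions on the blocks $\hat q_{\alpha,\beta}^\gamma$. First I would form $\hat q_{\alpha,\beta}=\oplus_{\gamma\in\hat G}\hat q_{\alpha,\beta}^\gamma$ under the Clebsch-Gordan decomposition $\nu_{\alpha,\beta}:E_\alpha\otimes E_\beta\to\oplus_\gamma c_{\alpha\beta}^\gamma E_\gamma$. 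Because each $\hat q_{\alpha,\beta}^\gamma$ acts only on the multiplicity space of the isotypic component $c_{\alpha\beta}^\gamma E_\gamma$, the full operator $\hat q_{\alpha,\beta}$ is automatically $G$-equivariant by Schur's lemma; under Fourier transform this equivariance on every $(\alpha,\beta)$-block is exactly the naturality condition $q\cdot\Delta(g)=\Delta(g)\cdot q$, so (\ref{Ginv}) is built in by construction.

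Next I would invoke Theorem \ref{expandQ}, which rewrites the coherence diagram (\ref{qu1}) in terms of the Fourier components. Assuming strictness, the associativity arrows in the middle column collapse to identities, and the outer commutative square of that diagram becomes an equality of two endomorphisms of $E_\alpha\otimes E_\beta\otimes E_\gamma$. Decomposing both sides into irreducible components indexed by $\zeta$ and reading off the $\zeta$-block, the top composition $(1\otimes \hat q_{\beta,\gamma})$ followed by $\hat q_{\alpha,\beta\gamma}$ contributes $\sum_{\eta}\hat q_{\alpha,\eta}^\zeta\,\hat q_{\beta,\gamma}^\eta$, while the bottom composition $(\hat q_{\alpha,\beta}\otimes 1)$ followed by $\hat q_{\alpha\beta,\gamma}$ contributes $\sum_{\xi}\hat q_{\xi,\gamma}^\zeta\,\hat q_{\alpha,\beta}^\xi$. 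Equating these block by block is precisely the quadratic system (\ref{coh}). For the normalization condition I would use that the unit object of $Mod_\mathbb{C}(G)$ is the trivial representation $E_0$ with $d_0=1$ and Clebsch-Gordan numbers $c_{\alpha,0}^\gamma=c_{0,\alpha}^\gamma=\delta_{\alpha,\gamma}$: then $\hat q_{\alpha,0}$ has a single nonzero block $\hat q_{\alpha,0}^\alpha$ on a one-dimensional multiplicity space, and (\ref{norm}) forces this scalar to equal $1$, and symmetrically for $\hat q_{0,\alpha}^\alpha$. Conversely, given the quadratic and normalization conditions, running all of the above in reverse and applying (\ref{invF}) produces a genuine quantizer, yielding the stated formula for $q$.

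The main obstacle will be the bookkeeping around the two bracketings $(E_\alpha\otimes E_\beta)\otimes E_\gamma$ and $E_\alpha\otimes(E_\beta\otimes E_\gamma)$: one has to identify their iterated Clebsch-Gordan decompositions into isomorphic $\oplus_\zeta m_\zeta E_\zeta$ and verify that the two sides of (\ref{coh}) really are being compared on the same multiplicity spaces, so that the matrix products $\hat q_{\alpha,\eta}^\zeta\,\hat q_{\beta,\gamma}^\eta$ and $\hat q_{\xi,\gamma}^\zeta\,\hat q_{\alpha,\beta}^\xi$ are well-defined in the same endomorphism algebra. In the strict setting used here this identification is the canonical one, but it is the step where strictness of the category and Schur's lemma have to be invoked explicitly to justify componentwise comparison across the pentagon-like diagram of Theorem \ref{expandQ}.
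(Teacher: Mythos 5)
Your proposal is correct and follows essentially the same route as the paper: the quadratic system is read off blockwise from the coherence diagram of Theorem \ref{expandQ} in the strict setting, and the second condition comes from normalization against the unit object $E_0$ with $c_{\alpha,0}^\gamma=c_{0,\alpha}^\gamma=\delta_{\alpha,\gamma}$. You in fact supply more detail than the paper's two-line proof, in particular the observation that naturality is automatic for block operators of the form $\hat q_{\alpha,\beta}^\gamma$ acting on multiplicity spaces, and the caveat about identifying the multiplicity spaces of the two bracketings.
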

\begin{proof}
The first condition follows from theorem \ref{expandQ}. 

The second condition is the normalization condition where $\hat q_{0,\alpha}=\hat q_{0,\alpha}^\alpha=1:1\otimes E_\alpha=E_\alpha\rightarrow 1\otimes E_\alpha=E_\alpha$ and $\hat q_{\alpha,0}=\hat q_{\alpha,0}^\alpha=1:E_\alpha\otimes 1=E_\alpha\rightarrow E_\alpha\otimes 1=E_\alpha$
\end{proof}

Let $F(l)=\sum_{\alpha\in\hat G}\hat l_\alpha$ be the Fourier transform of $l\in U(G)$ where $\hat l_\alpha\in\mathbb{C}^*$ due to Shur lemma, and $l_0=1$. Then action (\ref{ql-action}) can be rewritten as follows:
\begin{equation}\label{l-action}
l(\hat q_{\alpha,\beta})=\oplus_{\gamma\in \hat G} \hat l_\gamma \hat l_\alpha^{-1} \hat l_\beta^{-1}\hat  q_{\alpha,\beta}
\end{equation}
where $\hat l_\alpha,\hat l_\beta,\hat l_\gamma\in\mathbb{C}^*$.

\section{Finite abelian groups}\label{abgp}

Let $G$ be a finite abelian group and $R=\mathbb{C}$. 

In \cite{h2,h3} we investigated regular quantizations of modules
with action and coaction by finite abelian groups. In this section we shall revisit this case by using the Fourier transform.

By theorem \ref{th-q} the quantizations of G-modules have the form $x\otimes y\mapsto q\cdot(x\otimes y)$ for elements $x\in X,y\in Y$ in G-modules $X$ and $Y$ where $q=\sum_{g,h\in G}q_{g,h}(g,h)\in\mathbb{C}[G\times G]$.

Let $\hat G$ be the dual of $G$. All irreducible representations of G are 1-dimensional and identified with characters $\alpha\in Hom(G,\mathbb{C}^*)=\hat G$.

The Fourier transform has the form
\begin{equation*}
F(f)(\alpha)=\hat f_\alpha =\sum_{g\in G}f(g)\alpha(g^{-1})
\end{equation*}
for $f=\sum_{g\in G}f(g)g \in\mathbb{C}[G]$. The inverse of this Fourier transform is
\begin{equation*}
F^{-1}(\hat f_\alpha)=\frac{1}{|G|}\sum_{g\in G}\hat f_\alpha \alpha(g)g.
\end{equation*}
Then 
\begin{eqnarray}
F(q)(\alpha,\beta)=\hat q_{\alpha,\beta} =\sum_{g,h\in G}q_{g,h}\alpha(g^{-1})\beta(h^{-1})
\end{eqnarray}
is an operator 
$$\hat q_{\alpha,\beta}:E_\alpha\otimes E_\beta\rightarrow E_\alpha\otimes E_\beta$$ where $\alpha,\beta\in\hat G$.

Clearly $\hat q_{\alpha,\beta}\in\mathbb{C}^*$.

Corresponding to theorem \ref{Qfinal} we thus have the following conditions on $\hat q_{\alpha,\beta}$ 
\begin{eqnarray}
&\hat q_{\alpha\cdot\beta,\gamma}\hat q_{\alpha,\beta}=\hat q_{\alpha,\beta\cdot\gamma}\hat q_{\beta,\gamma}\\
&\hat q_{0,\alpha}=\hat q_{\alpha,0}=1
\end{eqnarray}
for all $\alpha,\beta,\gamma\in\hat G$ where the first condition is given by the coherence condition and the second is the normalization condition. 

Denote by $\bold{q}(\hat G)$ the group of all functions satisfying these conditions. 
We see that they are 2-cocycles. 

Hence  $\bold{q}(\hat G)$ is represented by the multiplicative 2-cocycles $\hat q$ on $\hat G$ with coefficients in $\mathbb{C}^*$, where 
$$\hat q(\alpha,\beta)=\hat q_{\alpha,\beta}.$$

The $\mathbb{C}^*$-action (\ref{l-action}) on the operators $\hat q_{\alpha,\beta}$ has the form 
\begin{equation}
\hat l_\alpha^{-1}\hat l_\beta^{-1} \hat q_{\alpha,\beta}\hat l_{\alpha\cdot\beta}
\end{equation}
where $\hat l_\alpha, \hat l_\beta,\hat l_{\alpha\cdot\beta}\in \mathbb{C}^*$.

Summing up, we get the following result.

\begin{theorem}
Let G be a finite abelian group. Then the group of regular quantizations $\text{\initfamily\selectfont Q}^\circ\normalfont(\hat G)$ is isomorphic to the 2nd multiplicative cohomology group $H^2(\hat G,\mathbb{C}^*)$.

Moreover, any 2-cocycle $z\in Z^2(\hat G,\mathbb{C}^*)$ defines an quantizer $q_z$ in the following way
\begin{equation}
q_z=\frac{1}{|G|^2}\sum_{g,h\in G}\sum_{\alpha,\beta\in\hat G}c(\alpha,\beta)\alpha(g)\beta(g)\cdot(g,h)\in\mathbb{C}[G\times G].
\end{equation}
\end{theorem}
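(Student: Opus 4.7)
The plan is to specialize Theorem \ref{Qfinal} to the abelian setting and recognize the resulting system as the defining equations of multiplicative 2-cocycles on $\hat G$. First I would observe that for abelian $G$ the irreducible representations are all 1-dimensional, so the Clebsch--Gordan integers degenerate to $c_{\alpha\beta}^\gamma=\delta_{\gamma,\alpha\cdot\beta}$, the decomposition (\ref{q-decomp}) has a single summand, and each operator $\hat q_{\alpha,\beta}\in\mathrm{End}(E_\alpha\otimes E_\beta)$ reduces to a scalar in $\mathbb{C}$. Regularity of the quantization forces invertibility of $Q_{X,Y}$ on every $X\otimes Y$, hence $\hat q_{\alpha,\beta}\in\mathbb{C}^*$. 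Under this identification the quadratic coherence system of Theorem \ref{Qfinal} collapses (after setting $\eta=\beta\gamma$ and $\xi=\alpha\beta$) to the single identity
\begin{equation*}
\hat q_{\alpha\cdot\beta,\gamma}\,\hat q_{\alpha,\beta}=\hat q_{\alpha,\beta\cdot\gamma}\,\hat q_{\beta,\gamma},
\end{equation*}
which, together with the normalization $\hat q_{0,\alpha}=\hat q_{\alpha,0}=1$, is precisely the definition of a normalized multiplicative $2$-cocycle on $\hat G$ with values in $\mathbb{C}^*$. Thus the regular quantizers are in bijection with $Z^2(\hat G,\mathbb{C}^*)$.

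Next I would translate the equivalence relation on quantizers into coboundaries. By Schur's lemma the Fourier transform of $l\in U(G)$ is a collection of scalars $\hat l_\alpha\in\mathbb{C}^*$ with $\hat l_0=1$, and formula (\ref{l-action}) specializes to
\begin{equation*}
l(\hat q_{\alpha,\beta})=\hat l_{\alpha\cdot\beta}\,\hat l_\alpha^{-1}\,\hat l_\beta^{-1}\,\hat q_{\alpha,\beta}.
\end{equation*}
The multiplier $\hat l_{\alpha\beta}\hat l_\alpha^{-1}\hat l_\beta^{-1}$ is exactly the multiplicative coboundary $(\delta \hat l^{-1})(\alpha,\beta)$, so two regular quantizers are equivalent if and only if the associated cocycles differ by an element of $B^2(\hat G,\mathbb{C}^*)$. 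Consequently the orbit space $\text{\initfamily\selectfont Q}^\circ\normalfont(\hat G)$ is canonically identified with $Z^2(\hat G,\mathbb{C}^*)/B^2(\hat G,\mathbb{C}^*)=H^2(\hat G,\mathbb{C}^*)$; a short check shows that composition of quantizations corresponds to the pointwise product of cocycles, so this bijection is a group isomorphism.

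Finally, to recover the explicit expression for $q_z\in\mathbb{C}[G\times G]$ attached to a cocycle $z(\alpha,\beta)=c(\alpha,\beta)$, I would apply the inverse Fourier transform (\ref{invF}), using that $d_\alpha=1$, $D^{\alpha,\beta}(g,h)=\alpha(g)\beta(h)$, and $D^{\alpha,\beta}(g,h)^*=\overline{\alpha(g)\beta(h)}=\alpha(g^{-1})\beta(h^{-1})$, so that the trace is just the scalar $c(\alpha,\beta)\alpha(g^{-1})\beta(h^{-1})$; the stated formula then follows after relabeling the summation variables $g\mapsto g^{-1},h\mapsto h^{-1}$. The main obstacle here is purely notational: one must keep straight the direction of the $U(G)$-action (cochain versus its inverse) so that the quotient is identified with $H^2$ rather than its opposite, and one must verify that the composition monoid structure on quantizers matches pointwise multiplication of cocycles; once these bookkeeping issues are handled, every other step is a direct specialization of results already proved above.
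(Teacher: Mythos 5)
Your proposal is correct and follows essentially the same route as the paper: specialize Theorem \ref{Qfinal} to one-dimensional irreducibles so that the coherence and normalization conditions become the normalized multiplicative $2$-cocycle identities on $\hat G$, identify the $U(G)$-action (\ref{l-action}) with coboundaries, and recover the explicit quantizer via the inverse Fourier transform (\ref{invF}). Your added checks (that composition of quantizers matches pointwise multiplication of cocycles, and the $g\mapsto g^{-1}$, $h\mapsto h^{-1}$ relabeling, which also exposes the apparent typo $\alpha(g)\beta(g)$ for $\alpha(g)\beta(h)$ in the displayed formula) are consistent with, and slightly more careful than, the paper's argument.
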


\section{Quantizations of $S_3$-modules}
We consider the symmetric group $G=S_3$. Let the representatives of the orbits of the adjoint action be $(),(1,2)$ and $(1,2,3)$ and let $\chi_0$, $\chi_1$ and $\chi_2$ be the characters of the irreducible representations corresponding to these orbits. These irreducible representations are the trivial, sign and standard representations on modules $E_0,E_1$ and $E_2$ with matrix realisations $D^0, D^1$ and $D^2$ respectively.
\begin{theorem}
For $S_3$-modules over $\mathbb{C}$ the set of quantizers $\text{\initfamily\selectfont Q}\normalfont(S_3)$ consists of 
\begin{itemize}
\item the trivial quantizer $q=1$:
\item the 1-parameter family of quantizers
\begin{eqnarray}
q_a&=&1 +p\sum_{g,h\in S_3} Tr(D^{2,2}_{0}(g,h)^*)+ Tr(D^{2,2}_{1}(g,h)^*)\cdot (g,h)
\end{eqnarray}
where $p\in\mathbb{C}$, and:
\item in addition the discrete set of discrete quantizers
\begin{eqnarray}
q_b&=&1 +\sum_{g,h\in S_3} Tr(D^{2,2}_{1}(g,h)^*)+ Tr(D^{2,2}_{2}(g,h)^*)\cdot (g,h)\\
q_c&=&1 +\sum_{g,h\in S_3} Tr(D^{2,2}_{2}(g,h)^*)\cdot (g,h)\\
q_d&=&1 +\sum_{g,h\in S_3} Tr(D^{2,2}_{1}(g,h)^*)\cdot (g,h)\\
q_e&=&1 +\sum_{g,h\in S_3} Tr(D^{2,2}_{0}(g,h)^*)+ Tr(D^{2,2}_{1}(g,h)^*)\cdot (g,h).
\end{eqnarray}
\end{itemize}
The operators $D_{i}^{2,2}:E_i\rightarrow E_i,i=0,1,2$ are the components of $D^{2,2}$ corresponding to the decomposition of the tensor product $E_2\otimes E_2=E_0\oplus E_1\oplus E_2$. 
\end{theorem}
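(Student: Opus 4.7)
The plan is to apply Theorem \ref{Qfinal}: classifying the quantizers of $Mod_\mathbb{C}(S_3)$ reduces to finding all collections of operators $\{\hat q^{\gamma}_{\alpha,\beta}\}$ satisfying the quadratic coherence system together with the normalization, modulo the equivalence (\ref{l-action}), and then inverting the Fourier transform. The irreducibles of $S_3$ are $E_0$ (trivial), $E_1$ (sign) and $E_2$ (standard, dimension $2$), with fusion $E_1\otimes E_1=E_0$, $E_1\otimes E_2=E_2\otimes E_1=E_2$, $E_2\otimes E_2=E_0\oplus E_1\oplus E_2$. Every Clebsch--Gordan coefficient is $0$ or $1$, so each $\hat q^{\gamma}_{\alpha,\beta}$ is a single complex scalar, and after normalization the six unknowns are
\[
a=\hat q^0_{1,1},\quad b=\hat q^2_{1,2},\quad c=\hat q^2_{2,1},\quad u=\hat q^0_{2,2},\quad v=\hat q^1_{2,2},\quad w=\hat q^2_{2,2}.
\]

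I would then write out the coherence condition of Theorem \ref{Qfinal} for each triple $(\alpha,\beta,\gamma)\in\hat G^{\,3}$ and project onto each $\zeta$-isotypic component. Triples containing $0$, and triples for which $E_\alpha\otimes E_\beta\otimes E_\gamma$ has a single irreducible summand, are automatic. The triples $(1,1,2)$ and $(2,1,1)$ immediately yield $a=b^2=c^2$. The triples $(1,2,2)$, $(2,1,2)$ and $(2,2,1)$ produce forcing relations, in particular identities of the form $u(b-c)=v(b-c)=w(b-c)=0$ and $u=vb$, splitting the solution set into a regular branch (with $b=c$, $a=b^2$ and $u=vb$) and a singular branch (with $u=v=0$).

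The main technical obstacle is the remaining triple $(2,2,2)$. The multiplicity of $E_2$ in $E_2^{\otimes 3}$ is $3$, so for $\zeta=2$ the coherence condition becomes a genuine $3\times3$ operator equation on a multiplicity space: both sides are diagonal with entries built from $u$, $cv$, $w^2$ and from $u$, $bv$, $w^2$ respectively, but expressed in two different bases related by the Racah / $6j$-matrix of $S_3$. The plan is to compute this change-of-basis matrix explicitly from the matrix form $D^2$ of the standard representation, and then read off from the resulting matrix equation the surviving constraints on $w$ and the eigenvalue coincidences (such as $u=cv$ or $w=0$) that distinguish the continuous branch from the discrete solutions.

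Finally, apply the equivalence (\ref{l-action}), which in the chosen coordinates rescales
\[
a\mapsto \hat l_1^{-2}a,\quad (b,c)\mapsto \hat l_1^{-1}(b,c),\quad u\mapsto \hat l_2^{-2}u,\quad v\mapsto \hat l_1\hat l_2^{-2}v,\quad w\mapsto \hat l_2^{-1}w,
\]
with $(\hat l_1,\hat l_2)\in(\mathbb{C}^*)^2$. Two degrees of freedom are thereby eliminated. After gauge-fixing, the regular branch collapses to the one-parameter family producing $q_a$, while the singular branch reduces to a finite list producing $q_b$, $q_c$, $q_d$, $q_e$ together with the trivial quantizer $q=1$. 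Applying the inverse Fourier transform of Theorem \ref{Qfinal} to each representative converts the scalar data back into the claimed element of $\mathbb{C}[S_3\times S_3]$.
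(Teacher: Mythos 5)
Your setup coincides with the paper's: the same six scalar unknowns after normalization, the same relations extracted from the mixed triples ($a=b^2$ and $c^2=a$ from $(1,1,2)$ and $(2,1,1)$; $u=bv$ from $(1,2,2)$; $(b-c)u=(b-c)v=(b-c)w=0$ from $(2,1,2)$), the same gauge action by $(\hat l_1,\hat l_2)$, and the same inverse Fourier transform at the end. The gap is that the step you yourself single out as the main technical obstacle --- the coherence condition for the triple $(2,2,2)$, where $E_2$ occurs in $E_2^{\otimes 3}$ with multiplicity $3$ --- is only announced, never performed. You do not compute the recoupling matrix, so you cannot say which constraints, if any, it imposes on the entries $u$, $bv$, $cv$, $w^2$, and therefore you cannot justify the claim that ``the regular branch collapses to the one-parameter family producing $q_a$'': whether the parameter $\lambda$ survives or is forced to particular values depends exactly on that unexecuted computation. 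Note that the paper resolves this triple differently: it matches the isotypic summands of $(E_2\otimes E_2)\otimes E_2$ and $E_2\otimes(E_2\otimes E_2)$ termwise, so its $E_2^{\otimes 3}$ equations are tautologies ($u=u$, $vc=vb$, $w^2=w^2$) and all of the operative relations (\ref{eq3})--(\ref{eq2}) come from the triples containing $E_1$. Your proposed treatment is strictly stronger than the paper's, so you must either carry it out and show it adds nothing, or fall back to the termwise reading and justify that; as written the proposal does neither.

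The second omission is the singular branch. Asserting that it ``reduces to a finite list producing $q_b$, $q_c$, $q_d$, $q_e$ together with the trivial quantizer'' is the conclusion, not an argument. You need to enumerate explicitly which vanishing patterns of $(a,b,c,u,v,w)$ are compatible with the relations, normalize the nonzero entries using the two gauge parameters, and then check which of the resulting sequences coincide or give the trivial quantizer after the inverse Fourier transform (the paper lists seven such sequences, two of which collapse to $q=1$). In particular, $a=b^2=c^2$ only yields $b=\pm c$, and the relations $(b-c)u=(b-c)v=(b-c)w=0$ do not exclude $b=-c$ when $u=v=w=0$; that subcase must be examined and either eliminated or placed in the list, rather than absorbed into the general statement that the remaining entries ``either vanish or map to $1$''.
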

\begin{proof}
The multiplication table for the characters of $S_3$ is
\begin{eqnarray}
\begin{array}{c|c|c|c|}
\cdot &\chi_0 &\chi_1&\chi_2\\\hline
\chi_0&\chi_0&\chi_1&\chi_2\\\hline
\chi_1&\chi_1&\chi_0&\chi_2\\\hline
\chi_2&\chi_2&\chi_2&\chi_0+\chi_1+\chi_2\\\hline
\end{array}
\end{eqnarray}
and by (\ref{tensor}) we get the multiplication table for irreducible representations
\begin{eqnarray}
\begin{array}{c|c|c|c|}
\otimes &E_0 &E_1&E_2\\\hline
E_0&E_0&E_1&E_2\\\hline
E_1&E_1&E_0&E_2\\\hline
E_2&E_2&E_2&E_0\oplus E_1\oplus E_2\\\hline
\end{array}
\end{eqnarray}
where the irreducible representations $E_0,E_1,E_2$ are $1,1$ and $2$ dimensional respectively.

By (\ref{q-decomp}) the quantizers $\hat q_{ij}$ in $End(E_i\otimes E_j)$ are decomposed as follows
\begin{eqnarray*}
\hat q_{11}&=&\hat q_{11}^0,\\
\hat q_{12}&=&\hat q_{12}^2,\\
\hat q_{21}&=&\hat q_{21}^2,\\
\hat q_{22}&=&\hat q_{22}^0\oplus  \hat q_{22}^1\oplus  \hat q_{22}^2.
\end{eqnarray*}

By normalization condition
\begin{eqnarray*}
\hat q_{00}=\hat q_{01}=\hat q_{10}=\hat q_{02}=\hat q_{20}=1.
\end{eqnarray*}

Theorem \ref{Qfinal} for triple tensor products of all combinations of $E_0,E_1,E_2$ gives the following relations (see the appendix for the details of the calculations)
\begin{eqnarray}
\hat q_{12} &=& \hat q_{21}\label{eq3},\\
\hat q_{11} &=& (\hat q_{12})^2,\label{eq1}\\ 
\hat q^0_{22} &=& \hat q_{12}\hat q^1_{22}.\label{eq2}
\end{eqnarray}

The action of the group $U(S_3)$ have the following form:
\begin{eqnarray*}
\hat q_{11}&\rightarrow&\frac{\hat l_0}{(\hat l_1)^2}\cdot{\hat q_{11}}=\frac{1}{(\hat l_1)^2}\cdot{\hat q_{11}}\label{1}\\
\hat q_{12}=\hat q_{21}&\rightarrow&\frac{\hat l_2}{\hat l_1\hat l_2}\cdot{\hat q_{12}}=\frac{1}{\hat l_1}\cdot{\hat q_{12}}\\
\hat q_{22}^0&\rightarrow&\frac{\hat l_0}{(\hat l_2)^2}\cdot{\hat q_{22}^0}=\frac{1}{(\hat l_2)^2}\cdot{\hat q_{22}^0}\\
\hat q_{22}^1&\rightarrow&\frac{\hat l_1}{(\hat l_2)^2}\cdot{\hat q_{22}^1}\\
\hat q_{22}^2&\rightarrow&\frac{\hat l_2}{(\hat l_2)^2}\cdot{\hat q_{22}^2}=\frac{1}{\hat l_2}\cdot{\hat q_{22}^2}
\end{eqnarray*}
where $\hat l_0=1$ and $\hat l_1,\hat l_2\in \mathbb{C}^*$. 

If the quantizers all are nonzero we may chose $\hat l_1,\hat l_2$ such that $\hat q_{12},\hat q^2_{22}\rightarrow 1$, by (\ref{eq3}-\ref{eq1}) then also $\hat q_{11},\hat q_{21}\rightarrow 1$ and by (\ref{eq2}) $\hat q^0_{22}=\hat q^1_{22}$. We then have the following sequence of quantizers depending on one parameter $\lambda \in\mathbb{C}$
\begin{eqnarray}
\begin{array}{c|c|c|c|c|c|c|}
&\hat q_{11}&\hat q_{12}&\hat q_{21} &\hat q_{22}^0&\hat q_{22}^1&\hat q_{22}^2\\\hline
a)&1&1&1&\lambda&\lambda&1\\\hline
\end{array}
\end{eqnarray}
Equivalently, the representatives can be chosen as follows:
\begin{eqnarray}\label{eqform}
\begin{array}{c|c|c|c|c|c|c|}
&\hat q_{11}&\hat q_{12}&\hat q_{21} &\hat q_{22}^0&\hat q_{22}^1&\hat q_{22}^2\\\hline
a')&\lambda^2&\lambda&\lambda&\lambda&1&1\\\hline
a'')&\lambda^{-2}&\lambda^{-1}&\lambda^{-1}&1&\lambda&1\\\hline
a''')&1&1&1&1&1&\lambda\\\hline
\end{array}
\end{eqnarray}

If one or both of the quantizers $\hat q_{12},\hat q^2_{22}$ are equal zero then rest will either be equal to 0 or map to 1 by choosing $l_1,l_2$ appropriately. 

By the conditions (\ref{eq3}-\ref{eq2}) they vary as follows
\begin{eqnarray}
\begin{array}{c|c|c|c|c|c|c|}
&\hat q_{11}&\hat q_{12}&\hat q_{21} &\hat q_{22}^0&\hat q_{22}^1&\hat q_{22}^2\\\hline
b)&0&0&0&0&1&1\\\hline
c)&0&0&0&0&0&1\\\hline
d)&0&0&0&0&1&0\\\hline
e)&1&1&1&1&1&0\\\hline
f)&0&0&0&0&0&0\\\hline
g)&1&1&1&0&0&0\\\hline
\end{array}
\end{eqnarray}

We now apply the inverse Fourier transform (\ref{invF}) to the quantizers a)-g) and get the corresponding element $q$ in the group algebra,
\begin{eqnarray*}
q&=&\frac{1}{|S_3|^2}\sum_{g,h\in S_3}F^{-1}(\hat q)(g,h)
=\frac{1}{|S_3|^2}\sum_{g,h\in S_3} \sum_{\chi_i,\chi_j\in \hat S_3} d_{i,j} Tr(D^{i,j}(g,h)^*\hat q_{ij})\cdot(g,h)\\
&=&\frac{1}{|S_3|^2}\sum_{g,h\in S_3} (1 + sign(h)+ sign(g)+ \hat q_{11}sign(g)sign(h))\cdot(g,h)\\
&+&  \frac{2}{|S_3|^2}\sum_{g,h\in S_3} (Tr(D^{2}(h)^*)+ Tr(D^{2}(g)^*)+ \hat q_{12}sign(g)Tr(D^{2}(h)^*)+ \hat q_{21}Tr(D^{2}(g)^*)sign(h))\cdot(g,h)\\
&+&  \frac{4}{|S_3|^2}\sum_{g,h\in S_3} Tr(D^{2,2}(g,h)^*\hat q_{22})\cdot(g,h)\\
&=&1+\frac{4}{|S_3|^2}\sum_{g,h\in S_3} Tr(D_{0}^{2,2}(g,h)^*\hat q^0_{22})+ Tr(D_{1}^{2,2}(g,h)^*\hat q^1_{22})+ Tr(D_{2}^{2,2}(g,h)^*\hat q^2_{22})\cdot(g,h)
\end{eqnarray*}
where $Tr(D^{i,j}(g,h)^*\hat q_{ij})=\hat q_{ij}Tr(D^{i}(g)^*\otimes D^{j}(h)^*)=\hat q_{ij}Tr(D^{i}(g)^*)Tr( D^{j}(h)^*)$. 

Since $Tr(trivial(g)^*)$ is 1 for all $g\in S_3$; $Tr(sign(g)^*)$ is 1 for (), (1,2,3) and (1,3,2) and -1 for (1,2), (1,3) and (2,3); and $Tr(D^2(g)^*)$ is 2 for (), -1 for (1,2,3) and (1,3,2) and 0 for (1,2), (1,3) and (2,3), most of the sum cancels out.

Then for option a) we let $\lambda=1+\frac{|S_3|^2}{4}p$ for $p\in\mathbb{C}$ and get the following class of quantizers
\begin{eqnarray}
q_a&=&1 +p\sum_{g,h\in S_3} Tr(D^{2,2}_{0}(g,h)^*)+ Tr(D^{2,2}_{1}(g,h)^*)\cdot(g,h).
\end{eqnarray}

In addition to this case the combinations b)-e) give the following quantizers
\begin{eqnarray}
q_b&=&1 +\sum_{g,h\in S_3} Tr(D^{2,2}_{1}(g,h)^*)+ Tr(D^{2,2}_{2}(g,h)^*)\cdot(g,h)\\
q_c&=&1 +\sum_{g,h\in S_3} Tr(D^{2,2}_{2}(g,h)^*)\cdot(g,h)\\
q_d&=&1 +\sum_{g,h\in S_3} Tr(D^{2,2}_{1}(g,h)^*)\cdot(g,h)\\
q_e&=&1 +\sum_{g,h\in S_3} Tr(D^{2,2}_{0}(g,h)^*)+ Tr(D^{2,2}_{1}(g,h)^*)\cdot(g,h)
\end{eqnarray}
The combinations f) and g) give the trivial quantizer.
\end{proof}

Remark that from (\ref{eqform}) the quantizer $q_a$ has the following equivalent forms
\begin{eqnarray}
q_{a'}&=&1 +p\sum_{g,h\in S_3} Tr(D^{2,2}_{0}(g,h)^*)\cdot(g,h)\\
q_{a''}&=&1 +p\sum_{g,h\in S_3} Tr(D^{2,2}_{1}(g,h)^*)\cdot(g,h)\\
q_{a'''}&=& 1+p\sum_{g,h\in S_3} Tr(D^{2,2}_{2}(g,h)^*)\cdot(g,h).
\end{eqnarray}

\section{Quantizations of $A_4$}
Let $G=A_4$ be the alternating group. Elements $(),(12)(34), (123), (132)$ represent the orbits of the adjoint $G$-action and let $\chi_0$, $\chi_1$, $\chi_2$ and $\chi_3$ be the characters of the irreducible representations corresponding to these orbits. 

These irreducible representations are the trivial representation, the first and second nontrivial one-dimensional representations and the three-dimensional irreducible representation on modules $E_0,E_1,E_2$ and $E_3$ with matrix realisations $D^0, D^1,D^2$ and $D^3$ respectively.
\begin{theorem}
For $A_4$-modules the set of quantizers $\text{\initfamily\selectfont Q}\normalfont(A_4)$  consists of 

\begin{itemize}
\item the trivial quantizer $q=1,$
\item  $\,\,q_a=1+\sum_{g,h\in A_4}Tr(D_{3}^{3,3}(g,h)^*M)\cdot(g,h)$

where $M=
\left[\begin{array}{cc}
\lambda & 0\\ 
0 &\kappa
\end{array}\right]
,
\left[\begin{array}{cc}
\lambda & 1\\
0 &\lambda
\end{array}\right]$, $\lambda, \kappa\in\mathbb{C}$,
\item $\,\,
q_b=1+\sum_{g,h\in A_4}Tr(D_{3}^{3,3}(g,h)^*P)\cdot(g,h)$
\item $\,\,
q_c=1+\sum_{g,h\in A_4}(Tr(D_{1}^{3,3}(g,h)^*)+Tr(D_{3}^{3,3}(g,h)^*P))\cdot(g,h)$
\item $\,\,
q_d=1+\sum_{g,h\in A_4}(Tr(D_{2}^{3,3}(g,h)^*)+Tr(D_{3}^{3,3}(g,h)^*P))\cdot(g,h)$
\item $\,\,
q_e=1+\sum_{g,h\in A_4}(Tr(D_{1}^{3,3}(g,h)^*)+Tr(D_{2}^{3,3}(g,h)^*)+Tr(D_{3}^{3,3}(g,h)^*P))\cdot(g,h)$

where $P$ are $2\times 2$-matrices of the form
$
\left[\begin{array}{cc}
0 & 0\\
0 &0
\end{array}\right]
,
\left[\begin{array}{cc}
0 & 1\\
0 & 0
\end{array}\right]
,
\left[\begin{array}{cc}
1 & 1\\
0 & 1
\end{array}\right]
,
\left[\begin{array}{cc}
1 & 0\\ 
0 &\lambda
\end{array}\right]
,\lambda\in \mathbb{C}$.
\end{itemize}
The operators $D_{i}^{3,3},i=0,1,2,3$ are components of $D^{3,3}$ corresponding to the decomposition $E_3\otimes E_3=E_0\oplus E_1\oplus E_2\oplus 2E_3$. 
\end{theorem}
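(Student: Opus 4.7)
The plan is to follow the template of the $S_3$ proof, extended to handle the extra irreducible and the multiplicity $c^3_{3,3}=2$. First I would compute the character multiplication table of $A_4$ to obtain the Clebsch--Gordan integers. The key structural fact is that $E_3\otimes E_3=E_0\oplus E_1\oplus E_2\oplus 2E_3$, while all other tensor products among $E_0,\dots,E_3$ are multiplicity-free (the one-dimensional block $\{E_0,E_1,E_2\}$ realizes the group law of $\mathbb{Z}/3$). Consequently, by theorem~\ref{Qfinal}, every component $\hat q^\gamma_{\alpha,\beta}$ is a scalar except $M:=\hat q^3_{3,3}\in End(\mathbb{C}^2)$, which is a $2\times 2$ matrix. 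The normalization condition immediately fixes $\hat q_{0,\alpha}=\hat q_{\alpha,0}=1$.

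Next I would write down the coherence equations from theorem~\ref{Qfinal} for every triple $(\alpha,\beta,\gamma)\in\hat A_4^{\times 3}$. Triples lying entirely in the one-dimensional block yield the classical $2$-cocycle equations for $\mathbb{Z}/3$ with values in $\mathbb{C}^*$; since $H^2(\mathbb{Z}/3,\mathbb{C}^*)=0$ those scalars can be normalized to $1$ using~(\ref{l-action}). Triples involving $E_3$ once or twice yield scalar equations tying together $\hat q_{i,3}, \hat q_{3,i}, \hat q^0_{3,3}, \hat q^1_{3,3}, \hat q^2_{3,3}$, in direct analogy with~(\ref{eq3})--(\ref{eq2}) of the $S_3$ proof. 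The triple $(3,3,3)$ produces a genuinely matrix equation: the two sums in~(\ref{coh}) combine the scalars $\hat q^i_{3,3}$ with left and right actions of $M$, and yield a polynomial-type constraint on $M$.

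Then I would quotient by the $U(A_4)$-action~(\ref{l-action}), using the three free scalars $\hat l_1,\hat l_2,\hat l_3\in\mathbb{C}^*$ to normalize scalar components to $1$ wherever possible, just as in~(\ref{eqform}). The matrix $M$ is rescaled by $\hat l_3^{-1}$ under this action, but the isomorphism $\nu_{3,3}$ in~(\ref{tensor}) on the isotypic component $2E_3$ is only defined up to $GL_2(\mathbb{C})$, and this residual freedom acts on $M$ by conjugation. In the generic case the matrix coherence equation reduces $M$ to a matrix satisfying a quadratic polynomial in itself, and conjugation brings $M$ either to $\mathrm{diag}(\lambda,\kappa)$ when the two roots are distinct or to the Jordan block $\left[\begin{smallmatrix}\lambda & 1\\ 0 & \lambda\end{smallmatrix}\right]$ when they coincide; this produces the $2$-parameter family $q_a$. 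Degenerating the scalar components to zero in various patterns singles out the finite list of special cases $q_b$--$q_e$ and the corresponding normal forms of $P$.

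Finally I would enumerate the zero/nonzero patterns of the scalar components in the spirit of the table b)--g) in the $S_3$ proof, solve for the compatible $M$ or $P$ in each case, and apply the inverse Fourier transform~(\ref{invF}) to land in $\mathbb{C}[A_4\times A_4]$. Heavy cancellation follows from the $A_4$ character values: $\chi_1,\chi_2$ take cube-root-of-unity values on the $3$-cycles and are $1$ on the $(12)(34)$-class, while $\chi_3$ vanishes on the $(12)(34)$-class and on both $3$-cycle classes, so the trace sums collapse to the compact forms stated. The main obstacle, absent in the $S_3$ case, will be the careful handling of the matrix block: one must justify that the $GL_2$-conjugation on the multiplicity space is genuinely part of the equivalence built into the definition of the orbit space, since the scalar rescaling in~(\ref{l-action}) alone only determines $M$ up to an overall scalar, and then one must systematically match each zero-pattern of the scalars with the conjugacy class of the admissible matrices $M$ or $P$ listed in the theorem.
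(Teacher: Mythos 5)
Your proposal follows essentially the same route as the paper: Fourier transform, splitting each $\hat q_{\alpha,\beta}$ into scalar components plus the single $2\times 2$ block $M=\hat q^3_{3,3}$, the coherence relations from all triples, normalization of the scalars via the $U(A_4)$-action, enumeration of zero patterns, and the inverse Fourier transform. One correction to your reasoning about the matrix block: the coherence equations involving the triple $(3,3,3)$ are trivially satisfied on the multiplicity space (they reduce to $M^2=M^2$ and to scalar identities already implied by $\hat q_{13}=\hat q_{31}$ etc.), so there is no ``polynomial-type constraint on $M$'' coming from coherence --- $M$ is an arbitrary $2\times 2$ matrix, and the reduction to $\mathrm{diag}(\lambda,\kappa)$ or a Jordan block is purely the Jordan normal form under the $GL_2(\mathbb{C})$ basis freedom on the multiplicity space (combined with the $\hat l_3^{-1}$ rescaling in the degenerate cases), which is exactly how the paper obtains the normal forms $M$ and $P$. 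A minor factual slip in your cancellation argument: $\chi_3\bigl((12)(34)\bigr)=-1$, not $0$; the vanishing occurs only on the two $3$-cycle classes, though this does not affect the structure of the argument.
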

\begin{proof}
The multiplication table for the characters of $A_4$ has the form
\begin{eqnarray}
\begin{array}{c|c|c|c|c|}
\cdot &\chi_0 &\chi_1&\chi_2&\chi_3\\\hline
\chi_0&\chi_0&\chi_1&\chi_2&\chi_3\\\hline
\chi_1&\chi_1&\chi_2&\chi_0&\chi_3\\\hline
\chi_2&\chi_2&\chi_0&\chi_1&\chi_3\\\hline
\chi_3&\chi_3&\chi_3&\chi_3&\chi_0+\chi_1+\chi_2+2\chi_3\\\hline
\end{array}
\end{eqnarray}
and by (\ref{tensor}) we get the multiplication table for irreducible representations
\begin{eqnarray}
\begin{array}{c|c|c|c|c|}
\otimes &E_0 &E_1&E_2&E_3\\\hline
E_0&E_0&E_1&E_2&E_3\\\hline
E_1&E_1&E_2&E_0&E_3\\\hline
E_2&E_2&E_0&E_1&E_3\\\hline
E_3&E_3&E_3&E_3&E_0\oplus E_1\oplus E_2\oplus 2E_3\\\hline
\end{array}
\end{eqnarray}
Remind that the irreducible representations $E_0,E_1,E_2,E_3$ are 1,1,1 and 3 dimensional respectively.

By (\ref{q-decomp}) the quantizers $\hat q_{ij}$ in $End(E_i\otimes E_j)$ split as follows
\begin{eqnarray*}
\hat q_{11}&=&\hat q^2_{11}\\
\hat q_{12}&=&\hat q^0_{12}\\
\hat q_{21}&=&\hat q^0_{21}\\
\hat q_{13}&=&\hat q^3_{13}\\
\hat q_{31}&=&\hat q^3_{31}\\
\hat q_{22}&=&\hat q^1_{22}\\
\hat q_{23}&=&\hat q^3_{23}\\
\hat q_{32}&=&\hat q^3_{23}\\
\hat q_{33}&=&\hat q^0_{33}\oplus \hat q^1_{33}\oplus \hat q^2_{33}\oplus [\hat q^3_{33}],
\end{eqnarray*}
where we use the notation $[\hat q^3_{33}]$ to keep in mind that this is a $2\times2$-matrix acting on $2E_3$. 

Further, by normalization condition
\begin{eqnarray*}
\hat q_{00}=\hat q_{01}=\hat q_{10}=\hat q_{02}=\hat q_{20}=\hat q_{03}=\hat q_{30}=1.
\end{eqnarray*}
Theorem \ref{Qfinal} for triple tensor products of all combinations of  $E_0,E_1,E_2,E_3$ give the following relations (see the appendix for details of the calculations)
\begin{eqnarray}
\hat q_{12}&=&\hat q_{21}=\hat q_{11}\hat q_{22}\label{cond1}\\
\hat q_{13}&=&\hat q_{31}\\
\hat q_{23}&=&\hat q_{32}\\
(\hat q_{13})^2&=&\hat q_{11}\hat q_{23} \label{cond2}\\
(\hat q_{23})^2&=&\hat q_{22}\hat q_{13}\\
\hat q_{33}^0&=&\hat q_{13}\hat q_{33}^1=\hat q_{23}\hat q_{33}^2\\
\hat q_{33}^0\hat q_{23}&=&\hat q_{33}^1\hat q_{12}\\
\hat q_{33}^0\hat q_{13}&=&\hat q_{33}^2\hat q_{12}\\
\hat q_{33}^1\hat q_{11}&=&\hat q_{33}^2\hat q_{13}\\
\hat q_{33}^1\hat q_{23}&=&\hat q_{33}^2\hat q_{22}.
\label{cond3}
\end{eqnarray}
Moreover, the action of the group $U(A_4)$ has the form
\begin{eqnarray*}
\hat q_{11}&\rightarrow&\frac{\hat l_2}{(\hat l_1)^2}\hat q_{11}\\
\hat q_{12}=\hat q_{21}&\rightarrow&\frac{\hat l_0}{\hat l_1\hat l_2}\hat q_{21}=\frac{1}{\hat l_1\hat l_2}\hat q_{21}\\
\hat q_{22}&\rightarrow&\frac{\hat l_1}{(\hat l_2)^2}\hat q_{22}\\
\hat q_{13}=\hat q_{31}&\rightarrow&\frac{\hat l_3}{\hat l_1\hat l_3}\hat q_{31}=\frac{1}{\hat l_1}\hat q_{31}\\
\hat q_{23}=\hat q_{32}&\rightarrow&\frac{\hat l_3}{\hat l_3\hat l_2}\hat q_{32}=\frac{1}{\hat l_2}\hat q_{32}\\
\hat q_{33}^0&\rightarrow&\frac{\hat l_0}{(\hat l_3)^2}\hat q_{33}^0=\frac{1}{(\hat l_3)^2}\hat q_{33}^0\\
\hat q_{33}^1&\rightarrow&\frac{\hat l_1}{(\hat l_3)^2}\hat q_{33}^1\\
\hat q_{33}^2&\rightarrow&\frac{\hat l_2}{(\hat l_3)^2}\hat q_{33}^2\\
\textit{and}\,\,\,\,\,[\hat q_{33}^3]&\rightarrow&\frac{1}{\hat l_3}[\hat q_{33}^3]
\end{eqnarray*}
where $\hat l_0=1$ and $\hat l_1,\hat l_2, \hat l_3\in \mathbb{C}^*$. 

Assume that the quantizers are non-zero. 
Then we may chose $\hat l_1,\hat l_2, \hat l_3$ in such a way that all $$\hat q_{11},\hat q_{22},\hat q_{12},\hat q_{21},\hat q_{13},\hat q_{31},\hat q_{23}, \hat q_{32},\hat q^0_{33},\hat q^1_{33},\hat q^2_{33}$$ are equal to 1.

What remains is the $2\times 2$-matrix $[\hat q^3_{33}]$ which by choosing of basis can be transformed to one of the following forms
$$[\hat q^3_{33}]=M=
\left[\begin{array}{cc}
\lambda & 0\\ 
0 &\kappa
\end{array}\right]
,
\left[\begin{array}{cc}
\lambda & 1\\
0 &\lambda
\end{array}\right]$$
where $\lambda, \kappa\in\mathbb{C}.$ 

Hence we have the sequence
\begin{eqnarray}
\begin{array}{c|c|c|c|c|c|c|c|c|c|c|c|c|}
&\hat q_{11}&\hat q_{22}&\hat q_{12}&\hat q_{21}&\hat q_{13}&\hat q_{31}&\hat q_{23}&\hat q_{32} &\hat q_{33}^0&\hat q_{33}^1&\hat q_{33}^2&[\hat q_{33}^3]\\\hline
a)&1&1&1&1&1&1&1&1&1&1&1&M\\\hline
\end{array}
\end{eqnarray} 

Assume now one or more of the quantizers $\hat q_{11},\hat q_{22},\hat q_{12},\hat q_{21},\hat q_{13},\hat q_{31},\hat q_{23},\hat q_{32},\hat q^0_{33},\hat q^1_{33},\hat q^2_{33}$ may be equal zero. The rest will then map to 1 by choosing $\hat l_1,\hat l_2$ properly.

By choosing $\hat l_3$ we reduce the matrix $M$ to one of the following matrices $P$:
$$[\hat q^3_{33}]\rightarrow P=
\left[\begin{array}{cc}
0 & 0\\
0 &0
\end{array}\right]
,
\left[\begin{array}{cc}
0 & 1\\
0 & 0
\end{array}\right]
,
\left[\begin{array}{cc}
1 & 1\\
0 & 1
\end{array}\right]
,
\left[\begin{array}{cc}
1 & 0\\ 
0 &\lambda
\end{array}\right]
.$$

Finally  by the conditions (\ref{cond1}-\ref{cond3}) give the following possible sequences
\begin{eqnarray}
\begin{array}{c|c|c|c|c|c|c|c|c|c|c|c|c|}
&\hat q_{11}&\hat q_{22}&\hat q_{12}&\hat q_{21}&\hat q_{13}&\hat q_{31}&\hat q_{23}&\hat q_{32} &\hat q_{33}^0&\hat q_{33}^1&\hat q_{33}^2&[\hat q_{33}^3]\\\hline
b)&1&1&1&1&1&1&1&1&0&0&0&P\\\hline
c)&0&0&0&0&0&0&0&0&0&1&0&P\\\hline
d)&0&0&0&0&0&0&0&0&0&0&1&P\\\hline
e)&0&0&0&0&0&0&0&0&0&1&1&P\\\hline
f)&0&0&0&0&0&0&0&0&0&0&0&P\\\hline
g)&0&1&0&0&0&0&0&0&0&1&0&P\\\hline
h)&0&1&0&0&0&0&0&0&0&0&0&P\\\hline
i)&1&0&0&0&0&0&0&0&0&0&1&P\\\hline
j)&1&0&0&0&0&0&0&0&0&0&0&P\\\hline
k)&1&1&1&1&0&0&0&0&0&0&0&P\\\hline
\end{array}
\end{eqnarray}

Applying the inverse Fourier transform (\ref{invF}) to the sequences we get the corresponding element in the group algebra,
\begin{eqnarray*}
q&=&\sum_{g,h\in A_4}F^{-1}(\hat q)(g,h)
=\sum_{g,h\in A_4} \sum_{\chi_i,\chi_j\in \hat A_4} d_{i,j} Tr(D^{i,j}(g,h)^*\hat q_{ij})\cdot(g,h)\\
&=&1+\frac{9}{|A_4|^2}\sum_{g,h\in A_4}(Tr(D_{0}^{3,3}(g,h)^*\hat q^0_{33})+Tr(D_{1}^{3,3}(g,h)^*\hat q^1_{33})+Tr(D_{2}^{3,3}(g,h)^* \hat q^2_{33})+Tr(D_{3}^{3,3}(g,h)^*[\hat q^3_{33}]))\cdot(g,h)
\end{eqnarray*}
where, as mentioned, most of the sum cancels out. 

Here $D_{i}^{3,3},i=0,1,2,3$ are components of $D^{3,3}$ operating on the decomposition $E_3\otimes E_3=E_0\oplus E_1\oplus E_2\oplus 2E_3$. 

Writing $M+I$ instead of $M$ we get a shorter form for $q$:
\begin{eqnarray*}
q_a&=&1+\frac{9}{|A_4|^2}\sum_{g,h\in A_4}Tr(D_{3}^{3,3}(g,h)^*M)\cdot(g,h)
\end{eqnarray*}

Further we get
\begin{eqnarray*}
q_b&=&1+\frac{9}{|A_4|^2}\sum_{g,h\in A_4}Tr(D_{3}^{3,3}(g,h)^*P)\cdot(g,h)\\
q_c&=&1+\frac{9}{|A_4|^2}\sum_{g,h\in A_4}(Tr(D_{1}^{3,3}(g,h)^*)+Tr(D_{3}^{3,3}(g,h)^*P))\cdot(g,h)\\
q_d&=&1+\frac{9}{|A_4|^2}\sum_{g,h\in A_4}(Tr(D_{2}^{3,3}(g,h)^*)+Tr(D_{3}^{3,3}(g,h)^*P))\cdot(g,h)\\
q_e&=&1+\frac{9}{|A_4|^2}\sum_{g,h\in A_4}(Tr(D_{1}^{3,3}(g,h)^*)+Tr(D_{2}^{3,3}(g,h)^*)+Tr(D_{3}^{3,3}(g,h)^*P))\cdot(g,h)
\end{eqnarray*}

The combinations f), h), j) and k) give the same quantizer as b), the combinations g) and c), i) and d) give the same quantizers.

We adjust the constants and have the result of the theorem.
\end{proof}

\section*{Appendix}
Let $G=S_3$. By theorem \ref{Qfinal} the quantizers on tensor products of triples of irreducible representations satisfy
\begin{align} 
E_1\otimes E_1\otimes E_2&: &\hat q_{11}\hat q_{02}E_2 &= \hat q_{12}\hat q_{12}E_2,\label{sq1}\\
E_1\otimes E_2\otimes E_1&: &\hat q_{12}\hat q_{21}E_2 &= \hat q_{21}\hat q_{12}E_2,\\
E_2\otimes E_1\otimes E_1&: &\hat q_{21}\hat q_{21}E_2 &= \hat q_{11}\hat q_{20}E_2,\\
E_1\otimes E_2\otimes E_2&: &\hat q_{12}\hat q^0_{22}E_0\oplus\hat q_{12}\hat q^1_{22}E_1\oplus\hat q_{12}\hat q^2_{22}E_2 &= \hat q^1_{22}\hat q_{11}E_0\oplus\hat q^0_{22}\hat q_{10}E_1\oplus\hat q^2_{22}\hat q_{12}E_2,\label{sq2}\\
E_2\otimes E_1\otimes E_2&: &\hat q_{21}\hat q^0_{22}E_0 \oplus  \hat q_{21}\hat q^1_{22}E_1\oplus\hat q_{21}\hat q^2_{22}E_2&= \hat q_{12}\hat q^0_{22}E_0\oplus\hat q_{12}\hat q^1_{22}E_1\oplus\hat q_{12}\hat q^2_{22}E_2,\label{sq3}\\
E_2\otimes E_2\otimes E_1&: & \hat q^1_{22}\hat q_{11}E_0\oplus\hat q^0_{22}\hat q_{01}E_1\oplus\hat q_{21}\hat q^2_{22}E_2 &= \hat q_{21}\hat q^0_{22}E_0\oplus\hat q_{21}\hat q^1_{22}E_1\oplus\hat q_{21}\hat q^2_{22}E_2,\\
E_2\otimes E_2\otimes E_2&: & \hat q^2_{22}\hat q^0_{22}E_0 \oplus\hat q^2_{22}\hat q^1_{22}E_1&= \hat q^2_{22}\hat q^0_{22}E_0\oplus\hat q^2_{22}\hat q^1_{22}E_1\\
and&&\hat q^0_{22}\hat q_{20}E_2\oplus \hat q^1_{22}\hat q_{21}E_2\oplus \hat q^2_{22}\hat q^2_{22}E_2 &= \hat q^0_{22}\hat q_{02}E_2\oplus \hat q^1_{22}\hat q_{12}E_2\oplus \hat q^2_{22}\hat q^2_{22}E_2.
\end{align}
Then by (\ref{sq1})
\begin{eqnarray*}
\hat q_{11} &=& \hat q_{12} \hat q_{12},
\end{eqnarray*}
by (\ref{sq2})
\begin{eqnarray*}
\hat q^0_{22} &=& \hat q_{12}\hat q^1_{22}
\end{eqnarray*}
and by e.g. (\ref{sq3})
\begin{eqnarray*}
\hat q_{12} &=& \hat q_{21}.
\end{eqnarray*}

Let $G=A_4$. By theorem \ref{Qfinal} the quantizers on tensor products of triples of irreducible representations satisfy
\begin{align} 
E_1\otimes E_1\otimes E_1&:&\hat q_{11}\hat q_{12}E_0&=\hat q_{11}\hat q_{21}E_0\\
E_1\otimes E_1\otimes E_2&:&\hat q_{12}\hat q_{10}E_1&=\hat q_{11}\hat q_{22}E_1\label{aq1}\\
E_1\otimes E_2\otimes E_1&:&\hat q_{21}\hat q_{10}E_1&=\hat q_{12}\hat q_{01}E_1\label{aq2}\\
E_2\otimes E_1\otimes E_1&:&\hat q_{11}\hat q_{22}E_1&=\hat q_{21}\hat q_{01}E_1\\
E_1\otimes E_2\otimes E_2&:&\hat q_{22}\hat q_{11}E_2&=\hat q_{12}\hat q_{02}E_2\\
E_2\otimes E_1\otimes E_2&:&\hat q_{12}\hat q_{20}E_2&=\hat q_{21}\hat q_{02}E_2\\
E_2\otimes E_2\otimes E_1&:&\hat q_{21}\hat q_{20}E_2&=\hat q_{22}\hat q_{11}E_2\\
E_1\otimes E_1\otimes E_3&:&\hat q_{13}\hat q_{13}E_3&=\hat q_{11}\hat q_{23}E_3\label{aq3}\\
E_1\otimes E_3\otimes E_1&:&\hat q_{31}\hat q_{13}E_3&=\hat q_{13}\hat q_{31}E_3\\
E_3\otimes E_1\otimes E_1&:&\hat q_{11}\hat q_{32}E_3&=\hat q_{31}\hat q_{31}E_3\label{aq4}\\
E_1\otimes E_3\otimes E_3&:\label{aq7}
&\hat q_{33}^2\hat q_{12}E_0\oplus\hat q_{33}^0\hat q_{10}E_1\oplus\hat q_{33}^1\hat q_{11}E_2\oplus[\hat q_{33}^3]\hat q_{13}2E_3
&=\hat q_{13}\hat q_{33}^0E_0\oplus\hat q_{13}\hat q_{33}^1E_1\oplus\hat q_{13}\hat q_{33}^2E_2\oplus\hat q_{13}[\hat q_{33}^3]2E_3\\
E_3\otimes E_1\otimes E_3&:
&\hat q_{13}\hat q_{33}^0E_0\oplus\hat q_{13}\hat q_{33}^1E_1\oplus\hat q_{13}\hat q_{33}^2E_2\oplus\hat q_{13}[\hat q_{33}^3]2E_3
&=\hat q_{31}\hat q_{33}^0E_0\oplus\hat q_{31}\hat q_{33}^1E_1\oplus\hat q_{31}\hat q_{33}^2E_2\oplus\hat q_{31}[\hat q_{33}^3]2E_3
\label{aq5}\\
E_3\otimes E_3\otimes E_1&:
&\hat q_{31}\hat q_{33}^0E_0\oplus\hat q_{31}\hat q_{33}^1E_1\oplus\hat q_{31}\hat q_{33}^2E_2\oplus\hat q_{31}[\hat q_{33}^3]2E_3
&=\hat q_{33}^2\hat q_{21}E_0\oplus\hat q_{33}^0\hat q_{01}E_1\oplus\hat q_{33}^1\hat q_{11}E_2\oplus[\hat q_{33}^3]\hat q_{31}2E_3\\
E_2\otimes E_2\otimes E_2&:&\hat q_{22}\hat q_{21}E_0&=\hat q_{22}\hat q_{12}E_0\\
E_2\otimes E_2\otimes E_3&:&\hat q_{23}\hat q_{23}E_3&=\hat q_{22}\hat q_{13}E_3\label{aq6}\\
E_2\otimes E_3\otimes E_2&:&\hat q_{32}\hat q_{23}E_3&=\hat q_{23}\hat q_{32}E_3\\
E_3\otimes E_2\otimes E_2&:&\hat q_{22}\hat q_{31}E_3&=\hat q_{32}\hat q_{32}E_3\\
E_2\otimes E_3\otimes E_3&:\label{aq8}
&\hat q_{33}^1\hat q_{21}E_0\oplus\hat q_{33}^2\hat q_{22}E_1\oplus\hat q_{33}^0\hat q_{20}E_2\oplus[\hat q_{33}^3]\hat q_{23}2E_3
&=\hat q_{23}\hat q_{33}^0E_0\oplus\hat q_{23}\hat q_{33}^1E_1\oplus\hat q_{23}\hat q_{33}^2E_2\oplus\hat q_{23}[\hat q_{33}^3]2E_3\\
E_3\otimes E_2\otimes E_3&:
&\hat q_{23}\hat q_{33}^0E_0\oplus\hat q_{23}\hat q_{33}^1E_1\oplus\hat q_{23}\hat q_{33}^2E_2\oplus\hat q_{23}[\hat q_{33}^3]2E_3
&=\hat q_{32}\hat q_{33}^0E_0\oplus\hat q_{32}\hat q_{33}^1E_1\oplus\hat q_{32}\hat q_{33}^2E_2\oplus\hat q_{32}[\hat q_{33}^3]2E_3\\
E_3\otimes E_3\otimes E_2&:
&\hat q_{32}\hat q_{33}^0E_0\oplus\hat q_{32}\hat q_{33}^1E_1\oplus\hat q_{32}\hat q_{33}^2E_2\oplus\hat q_{32}[\hat q_{33}^3]2E_3
&=\hat q_{33}^1\hat q_{12}E_0\oplus\hat q_{33}^2\hat q_{22}E_1\oplus\hat q_{33}^0\hat q_{02}E_2\oplus[\hat q_{33}^3]\hat q_{32}2E_3\\
E_3\otimes E_3\otimes E_3&:
&[\hat q_{33}^3]\hat q_{33}^02E_0\oplus[\hat q_{33}^3]\hat q_{33}^12E_1\oplus[\hat q_{33}^3]\hat q_{33}^22E_2
&=[\hat q_{33}^3]\hat q_{33}^02E_0\oplus[\hat q_{33}^3]\hat q_{33}^12E_1\oplus[\hat q_{33}^3]\hat q_{33}^22E_2\\
and&&\hat q_{33}^0\hat q_{30}E_3\oplus \hat q_{33}^1\hat q_{31}E_3\oplus \hat q_{33}^2\hat q_{32}E_3\oplus [\hat q_{33}^3][\hat q_{33}^3]4E_3
&=\hat q_{33}^0\hat q_{03}E_3\oplus \hat q_{33}^1\hat q_{13}E_3\oplus \hat q_{33}^2\hat q_{23}E_3\oplus [\hat q_{33}^3][\hat q_{33}^3]4E_3
\end{align}
where $[\hat q^3_{33}]$ is a $2\times2$-matrix on the 6 dimensional $2 E_3$. 

Then by (\ref{aq1}) and (\ref{aq2}),
\begin{eqnarray*}
\hat q_{11}\hat q_{22}=\hat q_{12}=\hat q_{21}
\end{eqnarray*}
by (\ref{aq5})
\begin{eqnarray*}
\hat q_{13}=\hat q_{31}
\end{eqnarray*}
by (\ref{aq3})
\begin{eqnarray*}
(\hat q_{13})^2=\hat q_{11}\hat q_{23}
\end{eqnarray*}
which together with (\ref{aq4}) gives 
\begin{eqnarray*}
\hat q_{23}=\hat q_{32}
\end{eqnarray*}
further by (\ref{aq6})
\begin{eqnarray*}
(\hat q_{23})^2&=&\hat q_{22}\hat q_{13}
\end{eqnarray*}
and the last conditions are given by the equations (\ref{aq7}) and (\ref{aq8})
\begin{eqnarray*}
&\hat q_{33}^0=\hat q_{13}\hat q_{33}^1=\hat q_{23}\hat q_{33}^2\\
&\hat q_{33}^0\hat q_{23}=\hat q_{33}^1\hat q_{12}\\
&\hat q_{33}^0\hat q_{13}=\hat q_{33}^2\hat q_{12}\\
&\hat q_{33}^1\hat q_{11}=\hat q_{33}^2\hat q_{13}\\
&\hat q_{33}^1\hat q_{23}=\hat q_{33}^2\hat q_{22}.
\end{eqnarray*}


\begin{thebibliography}{99}

\bibitem{h2} H. L. Huru. \textit{Quantization of braided algebras. 2. Graded modules}, vol. 25, Lobachevskii Journal of Mathematics, pp. 131-160,
http://ljm.ksu.ru/vol25/145.html, 2007.

\bibitem{h3} H. L. Huru. \textit{Quantization of braided algebras. 3. Modules with action by a group}, vol. 25, Lobachevskii Journal of Mathematics, pp.161-185, http://ljm.ksu.ru/vol25/146.html, 2007.

\bibitem{HL} H. L. Huru, V. V. Lychagin. \textit{Quantization and classical non-commutative and non-associative algebras}, Journal of Generalized Lie Theory and applications, vol. 2, no. 1, 24-33,
$www.ashdin.com/journals/jglta/2008/1/v2_n1_3.pdf, 2008$

\bibitem{lq} V. V. Lychagin. \textit{Quantizations of Differential Equations}, Pergamon Nonlinear Analysis 47, 2621-2632, 2001.

\bibitem{maclane} S. Maclane. \textit{Categories for the working matematician}, vol. 5 of Graduate Texts in Mathematics. Springer, 1998.

\bibitem{simon} B. Simon \textit{Representations of finite and compact groups}, Graduate studies in mathematics vol. 10, AMS, 1996.

\end{thebibliography}
\end{document}